\definecolor{vio}{rgb}{0.54, 0.17, 0.89}
\newtheorem{theorem}{Theorem}[section]
\newtheorem{lemma}[theorem]{Lemma}
\newtheorem{proposition}[theorem]{Proposition}
\numberwithin{equation}{section}
\theoremstyle{remark}
\newtheorem*{remark}{Remark}
\DeclareMathOperator{\PP}{\mathcal{P}}
\def\reals{\hbox{\rm I\kern-.18em R}}
\def\complexes{\hbox{\rm C\kern-.43em
\vrule depth 0ex height 1.4ex width .05em\kern.41em}}
\def\field{\hbox{\rm I\kern-.18em F}} %symbol for field
\let\svthefootnote\thefootnote
\newcommand\freefootnote[1]{%
  \let\thefootnote\relax%
  \footnotetext{#1}%
  \let\thefootnote\svthefootnote%
}
\newenvironment{section*}[2][A]{
  \section*{#2}
  \renewcommand\thesection{#1}
  \setcounter{theorem}{0}}{}
\begin{document}

\title[Almost primes between all squares]{Almost primes between all squares}

\author{Adrian W. Dudek and Daniel R. Johnston}
\address{School of Mathematics and Physics, The University of Queensland}
\email{a.dudek@uq.edu.au}
\address{School of Science, UNSW Canberra, Australia}
\email{daniel.johnston@unsw.edu.au}
\date\today

\begin{abstract}
    We prove that for all $n\geq 1$ there exists a number between $n^2$ and $(n+1)^2$ with at most 4 prime factors. This is the first result of this kind that holds for every $n\geq 1$ rather than just sufficiently large $n$. Our approach relies on a recent computation by Sorenson and Webster, along with an explicit version of the linear sieve. As part of our proof, we also prove an explicit version of Kuhn's weighted sieve. This is done for generic sifting sets to enhance the future applicability of our methods.
\end{abstract}

\maketitle

\freefootnote{\textit{Corresponding author}: Daniel Johnston (daniel.johnston@unsw.edu.au).}
\freefootnote{\textit{Affiliation}: School of Science, The University of New South Wales Canberra, Australia.}
\freefootnote{\textit{Key phrases}: Legendre's conjecture, sieve methods, linear sieve, explicit results.}
\freefootnote{\textit{2020 Mathematics Subject Classification}: 11N36, 11N05.}

\section{Introduction}
\subsection{Overview}
One of the most famous unsolved problems in number theory is Legendre's conjecture, which asks whether there is always a prime between consecutive squares $n^2$ and $(n+1)^2$ for all $n\geq 1$. Even under the Riemann hypothesis, Legendre's conjecture is still open, and in 1912  Landau called this problem ``unattackable", listing it amongst three other major unsolved problems in the theory of prime numbers. Further historical notes on Landau's problems can be found in \cite{pintz2009landau}.

Over the last century however, there has been significant progress towards Legendre's conjecture. A classical 1937 result of Ingham \cite{ingham1937difference} implies that there exists a prime between consecutive cubes $n^3$ and $(n+1)^3$ for sufficiently large $n$. Then, after a series of improvements, it is now known that there exists a prime between $n^{2.106}$ and $(n+1)^{2.106}$ for all sufficiently large $n$ (see \cite{baker2001difference}). It is much more difficult however, to obtain such a result for all $n\geq 1$. In particular, by making Ingham's method explicit, the first author was able to prove the following result in 2016.

\begin{theorem}[{\cite[Theorem 1.1.]{dudek2016explicit}}]\label{cubethm}
    There is a prime between $n^3$ and $(n+1)^3$ for all $n\geq\exp(\exp(33.3))$.
\end{theorem}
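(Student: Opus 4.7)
My plan is to follow Ingham's 1937 contour argument in an explicit form. Set $x=n^3$ and $h=(n+1)^3-n^3=3n^2+3n+1$, so that $h/x \sim 3/n$. It suffices to show $\psi(x+h)-\psi(x)>0$, where $\psi$ is the second Chebyshev function: the contribution of proper prime powers is of size $O(x^{1/2}\log x)$ and hence negligible relative to $h\sim 3x^{2/3}$, so any positive lower bound will force a genuine prime in the interval $(x,x+h]$.

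The starting point is the truncated Riemann--von Mangoldt explicit formula
\[
\psi(x+h)-\psi(x) = h - \sum_{|\gamma|\leq T}\frac{(x+h)^\rho - x^\rho}{\rho} + E(x,T),
\]
where $\rho=\beta+i\gamma$ runs over the nontrivial zeros of $\zeta$ and $E(x,T)$ is an explicit remainder that decays like $x\log^2(xT)/T$. The problem reduces to bounding the sum over zeros. For this I would combine two explicit inputs from the literature: a classical-type zero-free region of the shape $\beta\leq 1-1/(R_1\log|\gamma|)$, and an explicit zero-density estimate $N(\sigma,T)\leq c_1 T^{c_2(1-\sigma)}\log^{c_3} T$, each with the sharpest admissible constants currently available.

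The core of the proof is then to estimate $\sum_\rho |(x+h)^\rho-x^\rho|/|\rho|$ via the pointwise bound $|(x+h)^\rho-x^\rho|/|\rho|\leq \min(hx^{\beta-1}, 2x^\beta/|\gamma|)$, splitting the zeros dyadically in both $\sigma$ and $|\gamma|$, inserting the density estimate in each strip, and truncating the $\sigma$-integral at $1-1/(R_1\log T)$ using the zero-free region. After optimising the truncation height $T$ and the dyadic parameters, the resulting inequality takes the shape $h\geq Cx^{2/3}\exp(-c\sqrt{\log x})$, beyond which the main term $h$ dominates both the zero sum and $E(x,T)$. Since $h\sim 3x^{2/3}$, this is eventually satisfied, and a careful optimisation should push the threshold down to $n\geq\exp(\exp(33.3))$.

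The principal obstacle is the bookkeeping of explicit constants throughout this chain: the constants in the zero-free region, in the density estimate, in the truncated explicit formula, and in every dyadic split all combine multiplicatively, and each is sensitive to the choice of $T$. The doubly-exponential form of the threshold reflects the modest effective saving $\exp(-c\sqrt{\log x})$ coming from the classical-type zero-free region; one needs $\log\log n$ to exceed roughly $33$ before the accumulated constants can be absorbed into the main term. A secondary but routine matter is the passage from $\psi$ to $\pi$, which as noted costs only an $O(x^{1/2}\log x)$ term and is negligible in the relevant range.
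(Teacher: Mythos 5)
Note first that this theorem is not proved in the present paper; it is quoted from Dudek (2016), so there is no proof here to check your attempt against. Your sketch correctly names the ingredients used in that reference (an explicit form of the Riemann--von Mangoldt explicit formula, an explicit zero-free region, and an explicit Ingham-type zero-density estimate), but it glosses over precisely the point where the argument has content, and as written it would not close. With the sharply truncated explicit formula you propose, the remainder $\sim x\log^2(xT)/T$ forces $T\gg x^{1/3}\log^2 x$ once $h\sim 3x^{2/3}$. At that height the classical zero-free region only gives $1-\beta\gg 1/\log x$ for zeros near the boundary, so $x^{\beta-1}$ is merely bounded by a constant less than one rather than tending to zero; the pointwise bound $\sum_\rho h\,x^{\beta-1}$ then comes out on the order of $h$ times powers of $\log x$, not $o(h)$. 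The exponent $2/3$ is supplied by the density estimate in the bulk range $\sigma\le 1-\delta$, not by the zero-free region, and to make the pieces fit together the cited paper works with a smoothed weighted sum $\sum_n\Lambda(n)w(n)$ in place of the raw truncated $\psi$, which controls both the truncation error and the explicit constants.

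Relatedly, your heuristic that the $\exp(\exp(33.3))$ threshold ``reflects the $\exp(-c\sqrt{\log x})$ saving from the zero-free region'' is mislocated: that region on its own yields primes only in intervals of length $x\exp(-c\sqrt{\log x})$, nowhere near a power-saving length $x^{2/3}$. What actually pushes the threshold so far out is the conspiracy of weak decay near $\sigma=1$, the explicit constants appearing in the density estimate and in the explicit-formula remainder, and the accumulated $\log$-powers, all of which must be absorbed by a factor that itself shrinks only like $\exp(-c\sqrt{\log x})$. Your outline is a fair summary of the strategy, but the substance of the cited proof is exactly the bookkeeping and the splitting of the zero sum that your sketch defers.
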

Here, the impractically large $\exp(\exp(33.3))\approx 10^{10^{14}}$ was the result of estimates regarding the zeros of the Riemann zeta function, which have only been improved marginally in recent years (see e.g.\ \cite{cully2023primes,mossinghoff2024explicit}). 

In another direction, we can ask whether there exists an integer $k>0$ such that we can always find an integer $a$ with
\begin{equation}\label{almostlegendreeq}
    n^2<a<(n+1)^2\quad\text{and}\quad\Omega(a)\leq k.
\end{equation}
Here, as usual, $\Omega(a)$ is the number of prime factors of $a$ counting multiplicity, and from here onwards we will use the term \emph{almost prime} to refer to a positive integer with a small number of prime factors.

In 1920, Brun \cite[pp. 24--25]{brun1920crible} proved that for sufficiently large $n$, one could take $k=11$ in \eqref{almostlegendreeq}. After the rich development of sieve methods in the mid-20th century, this was eventually improved to $k=2$ by Chen \cite{chen1975distribution} in 1975, again for sufficiently large $n$.

The aim of this paper is to obtain the first explicit result of the form \eqref{almostlegendreeq} that holds for all $n\geq 1$. In particular, we prove the following.

\begin{theorem}\label{mainthm}
    For all $n\geq 1$, there exists $a\in(n^2,(n+1)^2)$ with $\Omega(a)\leq 4$.
\end{theorem}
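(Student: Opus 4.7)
The plan is to prove Theorem~\ref{mainthm} by splitting the range of $n$ at some explicit threshold $N_0$. For $n \leq N_0$, I would invoke the Sorenson--Webster computation alluded to in the abstract: it verifies Legendre's conjecture, namely that there is an actual prime between $n^2$ and $(n+1)^2$, throughout a large explicit range, and any prime has $\Omega=1$, so the conclusion holds trivially on this segment. All the analytic work then reduces to handling $n > N_0$, and the whole proof goes through only if that analytic threshold falls within the computational range.

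For $n > N_0$, let
\[
\mathcal{A} = \{a \in \mathbb{Z} : n^2 < a < (n+1)^2\},
\]
a set of roughly $2n$ consecutive integers. Since $|\mathcal{A}_d| \approx |\mathcal{A}|/d$ for squarefree $d$, sifting $\mathcal{A}$ by primes below $z := (2n)^{1/s}$ is a dimension-$1$ (linear) sieve problem with level of distribution essentially $n$, to which the explicit Jurkat--Richert linear sieve advertised in the abstract applies. A bare lower-bound sieve on $S(\mathcal{A}, \mathcal{P}, z)$ would give elements with all prime factors exceeding $z$, hence with $\Omega(a) \leq s$. Getting $s$ down to $4$ this way alone is not plausible with the available explicit constants, so a weighted refinement is essential.

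The refinement is Kuhn's weighted sieve, an explicit, generic form of which is to be proved in the paper. One forms a weighted count of the shape
\[
W = \sum_{\substack{a \in \mathcal{A} \\ (a,\,P(z))=1}} \Biggl( 1 - \sum_{\substack{p \mid a \\ z \leq p < y}} \Bigl( 1 - \tfrac{\log p}{\log y} \Bigr) \Biggr),
\]
where $P(z) = \prod_{p < z} p$ and $y > z$ is auxiliary. Any $a$ contributing positively to $W$ satisfies $\Omega(a) \leq K$, where $K$ is determined by the sieve exponents $\log z/\log(n+1)^2$ and $\log y/\log(n+1)^2$; arranging the parameters so that $K = 4$ is pure optimisation. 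I would then bound $W$ from below by applying the explicit linear sieve to the main term and to each inner sum of the shape $|\{a \in \mathcal{A} : (a,P(z))=1,\ p \mid a\}|$, and check that the resulting quantity is strictly positive for every $n \geq N_0$.

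The main obstacle is purely quantitative: the parameters $s$, $y$, and the internal constants of the explicit linear sieve must be balanced so that the Kuhn lower bound on $W$ is positive for \emph{every} $n \geq N_0$, while $N_0$ simultaneously remains below the Sorenson--Webster range. Small losses in the explicit sieve functions $F$ and $f$ inflate $N_0$ rapidly, so the optimisation has to be sharp. The structural framework (Kuhn's weighted sieve plus the linear sieve) is classical; the genuine content of the proof is making the numbers meet at $k = 4$ rather than $5$.
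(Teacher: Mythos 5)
Your overall architecture matches the paper's: a computational check for small $n$, a Kuhn-type weighted sieve combined with the explicit linear sieve for large $n$, and the correct observation that the proof hinges on whether the analytic threshold lands inside the computational range. The genuine gap is in the join. You assume that Sorenson--Webster's direct verification of Legendre's conjecture covers the small-$n$ segment, saying ``the conclusion holds trivially on this segment'' because a prime has $\Omega=1$. But the sieve threshold in the paper turns out to be $n^2 > 1.98\cdot 10^{28}$, while Sorenson--Webster's computation confirms a prime between consecutive squares only up to $n^2 \leq 4.97\cdot 10^{27}$ --- roughly a factor of four short. The paper closes this gap with a small but essential observation (Lemma~\ref{complem}): Sorenson--Webster in fact prove the stronger statement that there is a prime in each half-interval $(n^2, n(n+1))$ and $(n(n+1),(n+1)^2)$. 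For even $n$, one then takes a prime $p$ with $(n/2)^2 < p < (n/2)(n/2+1)$, so $4p \in (n^2,(n+1)^2)$ with $\Omega(4p)=3$; for odd $n$, one uses the half-interval just below $((n+1)/2)^2$ and notes $4p$ cannot equal $n^2$ since $4p$ is not a square. This quadruples the effective range to $n^2 \leq 1.98\cdot 10^{28}$, at the cost of producing a $P_3$ rather than a prime. Without this step your split leaves $(4.97\cdot 10^{27},\,1.98\cdot 10^{28}]$ uncovered, and there is no indication the sieve threshold can be forced below $4.97\cdot 10^{27}$ with the available explicit constants.

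A secondary deviation: the weight you write down, with factor $1-\log p/\log y$, is Richert's logarithmic weight, a refinement of Kuhn's. The paper's Lemma~\ref{kuhnlem} is an explicit version of the plainer Kuhn weight $1-\tfrac{1}{2}\sum_{z\le q<y,\,q^\ell\| a}\ell$, whose positivity already forces $\Omega(a)\le k_2$. Either form can in principle be made explicit, but the bookkeeping is different; if you insist on Richert's weight you would need to redo the explicit estimates rather than quote the paper's weighted-sieve lemma, since the latter is tied to the combinatorial form.
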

To prove Theorem \ref{mainthm}, we use an explicit version of Kuhn's weighted sieve, combined with a recent computational verification of Legendre's conjecture by Sorenson and Webster \cite{sorenson2025}. It is likely that our method could be refined by using more elaborate sieve weighting procedures. However, it appears difficult to reduce the number of prime factors in Theorem \ref{mainthm} whilst also maintaining the range $n\geq 1$. In this regard, we note that a rough calculation shows our method would only give $k=3$ in \eqref{almostlegendreeq} for $n^2\geq 10^{50}$. 

With small changes, one could also use our method to detect almost primes in intervals besides $(n^2,(n+1)^2)$. In particular, using our approach one should, for all $n\geq 1$, be able to prove the existence of a number with at most 3 prime factors in $(n^3,(n+1)^3)$, or a number with at most 2 prime factors in $(n^4,(n+1)^4)$.

Finally, we note that compared to Theorem \ref{cubethm}, our result holds for a far more practical range of $n$. The main reason for this is that whilst Theorem \ref{cubethm} was proven using estimates pertaining to the Riemann zeta function, our argument is primarily sieve-theoretic. This highlights the nascent capabilities of using sieves for explicit results in number theory, an area for which they are seldom employed in place of other classical methods.

\subsection{Paper outline}
An outline of the rest of the paper is as follows. In Section \ref{prelimsect}, we set up the required preliminaries for sieving and also state a recent explicit version of the linear sieve due to Bordignon, Starichkova and the second author \cite[Theorem 6]{BJV24}. In Section \ref{Kuhnsect} we then give an explicit version of Kuhn's weighted sieve that will form the basis of our argument. Finally, in Section \ref{mainsect} we prove Theorem \ref{mainthm}. An appendix is also included that details the calculations required to obtain explicit values of ``$Q$" and ``$\varepsilon$"  appearing in the conditions for the linear sieve.

\section{Preliminaries to sieving}\label{prelimsect}
In sieve-theoretic problems one requires a finite set of positive integers $A$, and an infinite set of primes $\mathcal{P}$. The aim is then to remove (or ``sift out") all elements of $A$ with prime divisors in $\mathcal{P}$ that are less than some number $z$. For our application, we will let $\mathcal{P}$ be the set of all primes, and
\begin{equation}\label{Adef}
    A=A(N):=\mathbb{Z}\cap(N,N+2\sqrt{N}).
\end{equation}
Setting $N=n^2$, one sees that any $a\in A$ is contained in the set $(n^2,(n+1)^2)$. 

Throughout we may take $N>1.98\cdot 10^{28}$, owing to the following consequence of a computation by Sorenson and Webster \cite{sorenson2025}.
\begin{lemma}\label{complem}
    For all $n^2\leq 1.98\cdot 10^{28}$, there exists a number with at most $3$ prime factors in the interval $(n^2,(n+1)^2)$. 
\end{lemma}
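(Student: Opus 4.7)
The proof proposal is effectively a direct appeal to the reference. Sorenson and Webster \cite{sorenson2025} have computationally verified Legendre's conjecture up to some explicit bound $n_0$; that is, they have checked that for every positive integer $n\leq n_0$, the interval $(n^2,(n+1)^2)$ contains at least one honest prime $p$. Since such a prime satisfies $\Omega(p)=1\leq 3$, it already witnesses the weaker conclusion required by the lemma.

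The plan therefore has a single step: extract from \cite{sorenson2025} the largest $n_0$ to which their verification reaches, and confirm that $n_0^2\geq 1.98\cdot 10^{28}$, i.e.\ $n_0\geq 1.407\ldots\cdot 10^{14}$. The number $1.98\cdot 10^{28}$ in the statement is deliberately chosen to sit safely inside the Sorenson--Webster range, so this check is purely bookkeeping: one matches the threshold appearing in their main computational statement against the threshold we want in our cutoff between the ``small $N$'' computational regime and the ``large $N$'' sieve-theoretic regime handled in Section~\ref{mainsect}.

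There is no analytic obstacle whatsoever; all of the genuine difficulty has been absorbed into the computation performed in \cite{sorenson2025}. The reason we phrase the conclusion as ``at most three prime factors'' rather than the stronger ``a prime'' is solely to align with the sieve-theoretic language used elsewhere in the paper, so that Lemma~\ref{complem} can be combined cleanly with the Kuhn-weighted-sieve output to yield Theorem~\ref{mainthm} for all $n\geq 1$.

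If one wished to avoid invoking the full strength of \cite{sorenson2025}, an alternative (but much weaker and harder) fallback would be to verify directly that some integer in $(n^2,(n+1)^2)$ has at most three prime factors for all $n$ up to the required threshold, for instance by checking small cases by hand and using any explicit prime-gap bound of the form $p_{k+1}-p_k\ll p_k^{\theta}$ for $\theta<1/2$ available in the range of interest. However, since \cite{sorenson2025} already supplies a prime in each such interval, this fallback is unnecessary, and the plan reduces to citing their result.
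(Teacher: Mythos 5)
There is a genuine gap: you have assumed that Sorenson and Webster's verification of Legendre's conjecture reaches $n^2\geq 1.98\cdot 10^{28}$, so that a direct citation suffices. In fact, the computation in \cite{sorenson2025} only verifies (a strengthened form of) Legendre's conjecture for $n^2\leq 4.97\cdot 10^{27}$ --- that is, for $n\lesssim 7\cdot 10^{13}$, roughly half the range of $n$ that the lemma requires. Your ``purely bookkeeping'' step of matching the thresholds would therefore fail, and the lemma does not reduce to a citation.

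The paper bridges the remaining range $n^2\in(4.97\cdot 10^{27},\, 1.98\cdot 10^{28}]$ by a multiplicative trick that produces a genuine almost prime rather than a prime. The Sorenson--Webster computation actually gives a prime in each \emph{half}-interval $(m^2,m(m+1))$ and $(m(m+1),(m+1)^2)$ for every $m$ with $m^2\leq 4.97\cdot 10^{27}$. For $n$ in the uncovered range, one applies this to $m\approx n/2$: when $n$ is even there is a prime $p$ with $(n/2)^2<p<(n/2)\bigl((n/2)+1\bigr)$, so $n^2<4p<(n+1)^2$; when $n$ is odd one uses the other half-interval for $m=(n+1)/2$ and then removes the degenerate boundary case $4p=n^2$ by noting $4p$ is never a perfect square. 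Since $\Omega(4p)=3$, this produces the promised number with at most $3$ prime factors. The factor $4$ is exactly why the cutoff $1.98\cdot 10^{28}\approx 4\times 4.97\cdot 10^{27}$ appears: the trick quadruples the reach of the computation in terms of $n^2$, and this enlarged cutoff is what makes the sieve argument of Section~\ref{mainsect} close the gap from above. Without this idea, the lemma as stated is not a consequence of \cite{sorenson2025} alone.

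Your ``fallback'' suggestion also does not rescue the argument: invoking an explicit prime-gap bound $p_{k+1}-p_k\ll p_k^{\theta}$ with $\theta<1/2$ valid down to $n^2\approx 4.97\cdot 10^{27}$ is essentially Legendre's conjecture in that range, which is not known unconditionally and is precisely what one is trying to avoid. The correct repair is the $4p$ construction above, not an appeal to a stronger analytic input.
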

\begin{proof}
    For $n^2\leq 4.97\cdot 10^{27}$, there exists a prime in both the intervals $(n^2,n(n+1))$ and $(n(n+1),(n+1)^2)$ by the computations in \cite{sorenson2025}. Hence, it suffices to consider $n^2\in(4.97\cdot 10^{27},1.98\cdot 10^{28}]$. We will show that for such a value of $n^2$, there exists an integer $4p\in(n^2,(n+1)^2)$, where $p$ is prime. Here $\Omega(4p)=3$.

    First consider the case where $n$ is even. Then by the computations in \cite{sorenson2025}, there exists a prime $p$ such that
    \begin{equation*}
        \left(\frac{n}{2}\right)^2<p<\frac{n}{2}\left(\frac{n}{2}+1\right)\quad\implies\quad n^2<4p<(n+1)^2,
    \end{equation*}
    as required. Similarly, in the case where $n$ is odd, there exists a prime $p$ such that
    \begin{equation*}
        \left(\frac{n+1}{2}-1\right)\frac{n+1}{2}<p<\left(\frac{n+1}{2}\right)^2\quad\implies \quad n^2-1<4p<(n+1)^2.
    \end{equation*}
    However, since $4p$ cannot be a square, $n^2<4p<(n+1)^2$ as required. 
\end{proof}
To prove the existence of almost primes in $A$ we require estimates for the sifting function 
\begin{equation}\label{SAPzeq}
    S(A,\mathcal{P},z):=\left|A\setminus\bigcup_{p\mid P(z)}A_p\right|,
\end{equation}
where $p$ is prime, $z\geq 2$, 
\begin{equation}\label{ApPzeq}
    A_p:=\{a\in A:p\mid a\}\quad\text{and}\quad P(z):=\prod_{\substack{p<z\\p\in\mathcal{P}}}p.
\end{equation}
This will be done using the following explicit version of the linear sieve due to Bordignon, Starichkova and the second author \cite{BJV24}. Here, we have slightly simplified their result for readability.
\begin{lemma}[{Explicit version of the linear sieve \cite[Theorem 6]{BJV24}}]\label{linsievelem}
    Let $A$ be a finite set of positive integers and $\mathcal{P}$ be an infinite set of primes such that no element of $A$ is divisible by a prime in the complement $\overline{\mathcal{P}}=\{p\ \text{prime}:p\notin\mathcal{P}$\}. Let $P(z)$ and $S(A,\mathcal{P},z)$ be as in \eqref{ApPzeq} and \eqref{SAPzeq} respectively. For any square-free integer $d$, let $g(d)$ be a multiplicative function with
    \begin{equation*}
        0\leq g(p)<1\ \text{for all}\ p\in\mathcal{P},
    \end{equation*}
    and
    \begin{equation*}
        A_d:=\{a\in A:d\mid a\}\quad\text{and}\quad r(d):=|A_d|-|A|g(d).
    \end{equation*}
    Let $\mathcal{Q}\subseteq\mathcal{P}$, and $Q=\prod_{q\in\mathcal{Q}}q$. Suppose that, for some $\varepsilon$ satisfying $0<\varepsilon\leq 1/74$, the inequality
    \begin{equation}\label{epseq}
        \prod_{\substack{p\in\mathcal{P}\setminus\mathcal{Q}\\ u\leq p<z}}\left(1-g(p)\right)^{-1}<(1+\varepsilon)\frac{\log z}{\log u},
    \end{equation}
    holds for all $1<u<z$. Then, for any $D\geq z$ we have the upper bound
    \begin{equation}\label{suppereq}
        S(A,\mathcal{P},z)<|A|V(z)\cdot\left(F(s)+\varepsilon C_1(\varepsilon)e^2h(s)\right)+R
    \end{equation}
    and for any $D\geq z^2$ we have the lower bound
    \begin{equation}\label{slowereq}
        S(A,\mathcal{P},z)>|A|V(z)\cdot (f(s)-\varepsilon C_{2}(\varepsilon) e^2h(s))-R,
    \end{equation}
    where
    \begin{equation*}
        s=\frac{\log D}{\log z},
    \end{equation*}
    \begin{equation} \label{eq-def-h(s)}
        h(s):= 
        \begin{cases} 
            e^{-2} & 1\le s \le 2, \\
            e^{-s} & 2\le s \le 3, \\
            3s^{-1}e^{-s} & s\ge 3,
        \end{cases} 
    \end{equation}
    $F(s)$ and $f(s)$ are the two functions defined by the following delay differential equations:
    \begin{equation} \label{def-f(s)-F(s)}
        \begin{cases}
            F(s) = \frac{2 e^{\gamma}}{s}, \quad f(s) = 0 &\text{for } 0 < s \le 2,\\
            (sF(s))' = f(s - 1), \quad (sf(s))' = F(s-1) &\text{for } s \ge 2,
        \end{cases}
    \end{equation}
    $C_1(\varepsilon)$ and $C_2(\varepsilon)$ come from \cite[Table 1]{BJV24},
    \begin{equation} \label{eq: def-X_A}
        V(z):=\prod_{p\mid P(z)}(1-g(p)),
    \end{equation}
    and the remainder term is
    \begin{equation}\label{Rdef}
        R:=\sum_{\substack{d|P(z)\\ d<QD}}|r(d)|.
    \end{equation}
\end{lemma}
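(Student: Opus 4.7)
The plan is to largely defer to the argument in \cite{BJV24}, since Lemma \ref{linsievelem} is a mild repackaging of \cite[Theorem 6]{BJV24}. At a strategic level, the proof there follows the classical Jurkat--Richert route to the linear sieve as refined by Iwaniec: one replaces the Möbius function in the natural sifting identity by two truncated combinatorial analogues $\mu^{\pm}$ whose partial sums yield upper and lower bounds for $S(A,\mathcal{P},z)$. The resulting ``buckle'' estimates are then combined with bounds on products of the local densities $g(p)$, and after standard Mellin-style asymptotics one recovers the functions $F(s)$ and $f(s)$ from \eqref{def-f(s)-F(s)}, together with remainders controlled by $h(s)$ and the arithmetic error term $R$.

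My first step would be to reconcile notation and hypotheses between the statement above and \cite[Theorem 6]{BJV24}. In particular, I would check that the piecewise function $h(s)$ in \eqref{eq-def-h(s)} agrees with the one appearing there on the relevant range $s \ge 1$, that the split into $D \ge z$ for the upper bound and $D \ge z^2$ for the lower bound is preserved, and that the constants $C_1(\varepsilon)$ and $C_2(\varepsilon)$ are quoted from \cite[Table 1]{BJV24} under the single uniform restriction $0 < \varepsilon \le 1/74$, rather than any finer case split used in the original.

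Second, I would verify that the linear-dimension condition \eqref{epseq} coincides with the hypothesis used in \cite{BJV24}. This inequality is the standard quantitative form of the statement that $\mathcal{P} \setminus \mathcal{Q}$ has sieve dimension $\kappa = 1$, with the exceptional primes of $\mathcal{Q}$ absorbed into the remainder through the enlarged summation range $d < QD$ in \eqref{Rdef}; the latter is obtained by factoring each square-free $d \mid P(z)$ as $d = d_1 d_2$ with $d_1 \mid Q$ and $(d_2,Q)=1$, so that the bad local factors are separated off without disturbing the main term $|A|V(z)$. Once these identifications are made, the conclusions \eqref{suppereq} and \eqref{slowereq} are immediate consequences of the corresponding inequalities in \cite[Theorem 6]{BJV24}.

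The main obstacle, if one wanted a genuinely self-contained argument rather than a citation, would be retracing the explicit constant extraction carried out in \cite{BJV24}. Producing the table of values $C_1(\varepsilon)$ and $C_2(\varepsilon)$ requires careful numerical tracking through Iwaniec's combinatorial identities and the Rosser--Iwaniec weights, together with explicit bounds on the relevant adjoint functions near $s=2$; a full re-derivation would run to many pages. Since Lemma \ref{linsievelem} is used only as a black box in the sequel, no such reconstruction is needed here.
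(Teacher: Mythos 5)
Your proposal is correct and matches the paper's treatment: the paper does not reprove this lemma but simply quotes \cite[Theorem 6]{BJV24} (with, as it notes, a slight simplification of the statement for readability), and you likewise defer to that reference as a black box. The background you give on the Jurkat--Richert/Rosser--Iwaniec route and the role of $\mathcal{Q}$ in the remainder is accurate but not part of the paper's argument, which contains no proof of this lemma at all.
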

For our application, with $A$ as in \eqref{Adef}, we have
\begin{equation*}
    2\sqrt{N}-1\leq |A|<2\sqrt{N}\ \text{for all}\ N\geq 1.
\end{equation*}
We also choose $g(d)=1/d$, so that
\begin{equation*}
    r(d)=|A_d|-\frac{|A|}{d}
\end{equation*}
and by \cite[Theorem 7]{rosser1962approximate},
\begin{equation*}
    V(z)=\prod_{p<z}\left(1-\frac{1}{p}\right)>\frac{e^{-\gamma}}{\log z} \left(1-\frac{1}{2 (\log z)^2}\right)
\end{equation*}
for all $z\geq 285$. Since $A$ is a set of consecutive integers, $|r(d)|\leq 1$, so that the sieve lower bound in \eqref{slowereq} yields the following.
\begin{lemma}\label{lowerlem}
    Let $A$ be as in \eqref{Adef}, $\PP$ be the set of all primes, and $z\geq 285$. Then, provided $D\geq z^2$ and $f(s)>\varepsilon C_2(\varepsilon) e^2h(s)$, we have
    \begin{equation}\label{lowereq}
        S(A,\PP,z)>\frac{e^{-\gamma}(2\sqrt{N}-1)}{\log z} \left(1-\frac{1}{2 (\log z)^2}\right)\cdot(f(s)-\varepsilon C_2(\varepsilon)e^2h(s))-\sum_{d<QD}\mu^2(d),
    \end{equation}
    where $s$, $f(s)$, $\varepsilon$, $C_2(\varepsilon)$, $h(s)$ and $Q$ are as in Lemma \ref{linsievelem}.
\end{lemma}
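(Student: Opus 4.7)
The plan is to obtain Lemma \ref{lowerlem} by specializing Lemma \ref{linsievelem} to the concrete choices $\mathcal{P}=\{\text{all primes}\}$, $A=\mathbb{Z}\cap(N,N+2\sqrt{N})$, and $g(d)=1/d$, and then substituting the three quantitative bounds flagged in the discussion immediately preceding the lemma.

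First I would verify that the hypotheses of Lemma \ref{linsievelem} are met. The choice $g(d)=1/d$ is multiplicative with $0\leq g(p)<1$ at every prime, and the condition that no element of $A$ is divisible by a prime in $\overline{\mathcal{P}}$ is vacuous since $\overline{\mathcal{P}}=\emptyset$. The conditions on $D$, $z$, $\varepsilon$ and $\mathcal{Q}$ are inherited directly from Lemma \ref{linsievelem}, so no further verification is required at this stage; they are carried along as hypotheses of Lemma \ref{lowerlem}.

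Next I would substitute the three estimates. The estimate $|A|\geq 2\sqrt{N}-1$ is immediate from the definition of $A$ as consecutive integers in an interval of length $2\sqrt{N}$. For $V(z)$, Mertens' product as made explicit by Rosser--Schoenfeld \cite[Theorem 7]{rosser1962approximate} gives the stated lower bound $V(z)>\frac{e^{-\gamma}}{\log z}\bigl(1-\frac{1}{2(\log z)^2}\bigr)$ for $z\geq 285$. For the remainder, since $A$ is an interval of consecutive integers the number of multiples of $d$ in $A$ differs from $|A|/d$ by at most one, so $|r(d)|\leq 1$ for every squarefree $d$. Because every $d\mid P(z)$ appearing in \eqref{Rdef} is squarefree, this yields
\begin{equation*}
    R=\sum_{\substack{d\mid P(z)\\ d<QD}}|r(d)|\leq\sum_{\substack{d\mid P(z)\\ d<QD}}1\leq\sum_{d<QD}\mu^2(d).
\end{equation*}

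Finally, the hypothesis $f(s)>\varepsilon C_2(\varepsilon)e^2h(s)$ guarantees that the bracketed factor in \eqref{slowereq} is strictly positive, so that replacing $|A|$ by the smaller positive quantity $2\sqrt{N}-1$ and $V(z)$ by its lower bound only shrinks the main term; combining this with the upper bound on $R$ above yields \eqref{lowereq}. No genuine obstacle arises here: the lemma is a bookkeeping specialization, and the only point requiring any care is ensuring positivity of the main term before weakening $|A|$ and $V(z)$ downwards, which is exactly what the condition $f(s)>\varepsilon C_2(\varepsilon)e^2h(s)$ provides.
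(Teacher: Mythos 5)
Your proposal is correct and follows essentially the same route the paper takes: specialize Lemma \ref{linsievelem} with $g(d)=1/d$, then insert $|A|\geq 2\sqrt{N}-1$, the Rosser--Schoenfeld lower bound for $V(z)$, and $|r(d)|\leq 1$ (so $R\leq\sum_{d<QD}\mu^2(d)$). The only point deserving care, which you correctly identify, is that the hypothesis $f(s)>\varepsilon C_2(\varepsilon)e^2h(s)$ makes the bracketed factor positive so that weakening $|A|V(z)$ downward preserves the inequality.
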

Suitable corresponding values of $Q$ and $\varepsilon$ are derived in the Appendix (Proposition \ref{PrimeProductBound}). Directly from \eqref{def-f(s)-F(s)}, we also have the following expressions for $F(s)$ and $f(s)$ when $1\leq s\leq 3$ and $2\leq s\leq 4$ respectively.
\begin{lemma}\label{flem}
    Let $F(s)$ and $f(s)$ be as in \eqref{def-f(s)-F(s)}. Then, for $1\leq s\leq 3$,
    \begin{equation*}
        F(s)=\frac{2e^{\gamma}}{s},
    \end{equation*}
    and for $2\leq s\leq 4$,
    \begin{equation*}
        f(s)=\frac{2e^{\gamma}\log(s-1)}{s}.
    \end{equation*}
\end{lemma}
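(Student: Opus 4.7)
The plan is a direct integration of the delay differential equations in \eqref{def-f(s)-F(s)}, proceeding interval by interval and using the boundary values supplied by the initial piece $0 < s \le 2$.

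First I would handle $F(s)$ on $[1,3]$. On $[1,2]$ the claim $F(s) = 2e^\gamma/s$ is just the definition. For $s \in [2,3]$, the shifted argument satisfies $s-1 \in [1,2]$, so $f(s-1) = 0$ by the initial piece. The relation $(sF(s))' = f(s-1)$ then gives $(sF(s))' = 0$ on $[2,3]$, so $sF(s)$ is constant there. Evaluating at $s = 2$ using the $[1,2]$ formula yields $sF(s) \equiv 2 \cdot (2e^\gamma/2) = 2e^\gamma$, and therefore $F(s) = 2e^\gamma/s$ on $[2,3]$ as well.

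Next I would establish the formula for $f$ on $[2,4]$. For $s$ in this range, $s-1$ lies in $[1,3]$, where by the previous step we have $F(s-1) = 2e^\gamma/(s-1)$. Integrating $(sf(s))' = F(s-1)$ from $2$ to $s$, and using $f(2) = 0$ from the initial piece, gives
\begin{equation*}
sf(s) = \int_2^s \frac{2e^\gamma}{t-1}\,dt = 2e^\gamma \log(s-1),
\end{equation*}
so $f(s) = 2e^\gamma \log(s-1)/s$, as claimed.

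There is no real obstacle here; the only thing to watch is making sure the initial value $f(2) = 0$ is taken from the boundary piece (so that $f$ is continuous at $s=2$) and that the range of $s-1$ lies in an interval on which $F$ (respectively $f$) has already been computed, which is why the two statements must be proven in this order.
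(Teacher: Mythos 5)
Your proof is correct and is precisely the standard derivation the paper is alluding to when it says these expressions follow ``directly from'' the delay differential equations \eqref{def-f(s)-F(s)}; the paper itself offers no written proof of Lemma \ref{flem}. Your stepwise integration, first establishing $F$ on $[2,3]$ via $(sF(s))'=f(s-1)=0$ and then feeding that into $(sf(s))'=F(s-1)$ with $f(2)=0$ to get $f$ on $[2,4]$, is exactly the intended argument.
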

More complicated expressions also exist for larger $s$ (see e.g.\ \cite[Lemma 2]{cai2008chen}). However, for our purposes the ranges of $s$ above are sufficient.

From Lemma \ref{lowerlem}, one can readily detect almost primes in $A$. In particular, if
\begin{equation*}
    X=N+2\sqrt{N}\ \text{and}\ z=X^{1/(k+1)},
\end{equation*}
then a positive lower bound in \eqref{lowereq} would imply the existence of elements $a\in A$ whereby all prime factors of $a$ are greater than $X^{1/(k+1)}$. Since any $a\in A$ satisfies $a<X$, it therefore follows that such an $a$ could only have at most $k$ prime factors.

This basic approach however, is rather inefficient as it sifts out many other desired elements of $A$ with a mix of small and large prime factors. Consequently, it leads to a slightly worse result than what is given in Theorem \ref{mainthm}. Namely, rough calculations by the authors give no better than $k=5$ for all $N>1.98\cdot 10^{28}$. Therefore, in order to obtain Theorem \ref{mainthm}, we must apply a \emph{weighted} sieve, which will use a combination of the upper and lower bounds for $S(A,\mathcal{P},z)$ in \eqref{linsievelem}. An explicit version of such a sieve method will be the focus of the next section. 

\section{An explicit version of Kuhn's weighted sieve}\label{Kuhnsect}
In this section we give an explicit version of a weighted sieve originally due to Kuhn \cite{kuhn1954neue}. Instead of simply sifting our set $A$ so that all remaining $a\in A$ have large prime factors, we will also keep $a\in A$ where at most one prime factor of $a$ is much smaller than the rest.

To do this, we introduce two parameters $k_1\in\mathbb{R}$ and $k_2\in\mathbb{Z}$ with $k_1\geq k_2\geq 2$. We then set
\begin{equation}\label{Xdef}
    X:=\max(A)
\end{equation}
and 
\begin{equation}\label{zyeq}
    z=X^{1/k_1}\ \text{and}\ y=X^{1/k_2}. 
\end{equation}
With $P(z)$ as in \eqref{ApPzeq}, we consider for any $a\in A$ with $(a,P(z))=1$, the ``weight" function
\begin{equation*}
    w(a)=1-\frac{1}{2}\sum_{\substack{q\in\PP\\ z\leq q<y\\q^{\ell}\| a}}\ell.
\end{equation*}
Note that $w(a)\leq 1$ and $w(a)=1$ if and only if $a$ has no prime divisors in $[z,y)$.

For $a$ as above, we write $a=p_1\cdots p_r\cdots p_{r+s}$ for the prime factorisation of $a$, where
\begin{equation*}
    z\leq p_1\leq\ldots\leq p_r< y\leq p_{r+1}\leq \ldots\leq p_{r+s}. 
\end{equation*}
Since $y=X^{1/k_2}$, we must have $s\leq k_2-1$.

Now suppose that $w(a)>0$. Since 
\begin{equation*}
    \frac{1}{2}\sum_{\substack{q\in\PP\\z\leq q<y\\q^\ell\| a}}\ell =\frac{r}{2},
\end{equation*}
we have $r\in\{0,1\}$. Therefore, $w(a)>0$ implies $r+s\leq k_2$. So defining
\begin{equation}\label{rkdef}
    r_k(A):=|\{a\in A:\Omega(a)\leq k\}|,
\end{equation}
we have
\begin{equation}\label{rkeq}
    r_{k_2}(A)\geq\sum_{a\in A}w(a).
\end{equation} 
The following lemma gives a bound for the sum in \eqref{rkeq}. For now, this will be done with generic sifting sets $A$ and $\PP$. Note that we also include a condition \eqref{q2con}, which is satisfied by most sifting problems (cf.\ \cite[page 253]{halberstam1974sieve}).
\begin{lemma}\label{kuhnlem}
    Let $A$ be a finite set of positive integers and $\mathcal{P}$ be an infinite set of primes such that no element of $A$ is divisible by a prime in $\overline{\mathcal{P}}$. Let $X$, $z$ and $y$ be as in \eqref{Xdef} and \eqref{zyeq}. Suppose further that the condition
    \begin{equation}\label{q2con}
        \sum_{\substack{z\leq q<y\\ q\in\mathcal{P}}}|A_{q^2}|\leq c_1\frac{|A|\log|A|}{z}+c_2y,
    \end{equation}
    holds for some constants $c_1,c_2>0$. Then,
    \begin{equation}\label{kuhneq}
        r_{k_2}(A)\geq S(A,\PP,z)-\frac{1}{2}\sum_{\substack{q\in\PP\\z\leq q<y}}S(A_q,\PP,z)-\frac{k_1 c_1|A| \log|A|}{2 z} -\frac{c_2 y}{2 \log z},
    \end{equation}
    where $r_{k_2}(A)$ is as in \eqref{rkdef}. 
\end{lemma}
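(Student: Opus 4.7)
Starting from the already-established inequality $r_{k_2}(A)\geq\sum_{a\in A,\,(a,P(z))=1}w(a)$ in \eqref{rkeq}, my plan is to unfold the weight, interchange the order of summation, and isolate the $j=1$ contribution from higher prime powers. Specifically, using the definition of $w(a)$ I would write
\begin{equation*}
    \sum_{\substack{a\in A\\(a,P(z))=1}} w(a)
    = S(A,\PP,z)-\frac{1}{2}\sum_{\substack{q\in\PP\\ z\leq q<y}}\;\sum_{\substack{a\in A\\(a,P(z))=1}} v_q(a),
\end{equation*}
where $v_q(a)$ is the exponent with $q^{v_q(a)}\|a$. The elementary identity $v_q(a)=\sum_{j\geq 1}\mathbf{1}[q^j\mid a]$ then splits the inner sum as $S(A_q,\PP,z)+\sum_{j\geq 2}S(A_{q^j},\PP,z)$, producing the first two terms of \eqref{kuhneq} for free and reducing the remaining work to bounding the ``high power'' tail
\begin{equation*}
    E:=\sum_{\substack{q\in\PP\\ z\leq q<y}}\sum_{j\geq 2}S(A_{q^j},\PP,z).
\end{equation*}

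For $E$, I would pass to cardinalities via $S(A_{q^j},\PP,z)\leq|A_{q^j}|$ and use the combinatorial identity
\begin{equation*}
    \sum_{j\geq 2}|A_{q^j}|=\sum_{\substack{a\in A\\ q^2\mid a}}(v_q(a)-1).
\end{equation*}
Since $a\leq X$ and $q\geq z=X^{1/k_1}$ force $v_q(a)\leq k_1$, each summand is at most $k_1-1$, giving $\sum_{j\geq 2}|A_{q^j}|\leq(k_1-1)|A_{q^2}|$. Summing over $q$ and invoking the hypothesis \eqref{q2con} handles the main $\tfrac{k_1 c_1|A|\log|A|}{2z}$ loss. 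For the sharper $\tfrac{c_2 y}{2\log z}$ error, I would split off the $j=2$ level (which inherits the $c_2 y$ piece of \eqref{q2con} without being multiplied by $k_1$) from the $j\geq 3$ contributions (where $q^j\geq z^3$ makes $|A_{q^j}|$ considerably smaller), and absorb the latter through the prime-counting bound $\pi(y)-\pi(z)\leq y/\log z$.

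The main obstacle will be this final piece of explicit bookkeeping: a naive chaining of the inequalities above would yield only the weaker error $\tfrac12 k_1 c_2 y$, so achieving $\tfrac{c_2 y}{2\log z}$ demands that one not route the $c_2 y$ portion of \eqref{q2con} through the uniform bound $v_q(a)-1\leq k_1$, but instead treat the $j=2$ and $j\geq 3$ contributions separately and let the prime-counting estimate $\pi(y)-\pi(z)\leq y/\log z$ govern the tail. Once this split is carried out cleanly, assembling the pieces and dividing by $2$ yields precisely \eqref{kuhneq}.
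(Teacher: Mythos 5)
Your decomposition is essentially the same as the paper's: start from \eqref{rkeq}, unfold $w(a)$, peel off $S(A,\PP,z)$, split the $\ell$-sum into $\ell=1$ and $\ell\geq2$, and bound the $\ell\geq2$ tail using $v_q(a)-1<k_1$ together with $\sum_q|A_{q^2}|$ and hypothesis \eqref{q2con}. The one genuinely useful observation you make is also correct: a direct chaining of these steps produces the error term
\begin{equation*}
    \frac{k_1}{2}\sum_{\substack{q\in\PP\\ z\leq q<y}}|A_{q^2}|\;\leq\;\frac{k_1}{2}\left(c_1\frac{|A|\log|A|}{z}+c_2y\right)=\frac{k_1c_1|A|\log|A|}{2z}+\frac{k_1c_2y}{2},
\end{equation*}
not the $\frac{k_1c_1|A|\log|A|}{2z}+\frac{c_2y}{2\log z}$ stated in \eqref{kuhneq}. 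This is in fact exactly what the paper's own proof yields --- the last sentence of the paper's proof reads ``a direct application of condition \eqref{q2con} finishes the proof'', which gives $\frac{k_1c_2y}{2}$, not $\frac{c_2y}{2\log z}$ --- so the discrepancy you noticed is a real problem in the paper rather than a secret shortcut you are missing. Since $y=X^{1/k_2}\ll\sqrt{N}/\log X$, the weaker $\frac{k_1c_2y}{2}$ still suffices for the application in \eqref{r4boundfirst}, so the downstream conclusion (Theorem \ref{mainthm}) is unaffected, but the lemma as stated is not what the argument proves.

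However, your proposed repair does not close that gap. If you separate $j=2$ from $j\geq3$ and invoke \eqref{q2con} only once at the $j=2$ level, the $c_2y$ piece comes through undivided by $\log z$, and you additionally \emph{lose} the $k_1$ factor on the $c_1|A|\log|A|/z$ piece that \eqref{kuhneq} retains; meanwhile the $j\geq3$ tail $\sum_q\sum_{j\geq3}|A_{q^j}|\leq\sum_q\sum_{j\geq3}\bigl(|A|/q^j+1\bigr)$ contributes, via the number of admissible exponents per prime and a prime-count such as $\pi(y)-\pi(z)$, a term of size roughly $(k_1-3)y/\log z$ --- you cannot simultaneously route the $c_1$ portion through a factor of $k_1$ and the $c_2$ portion through a factor of $1/\log z$ by that split. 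The sound way forward is either to weaken the claimed error to $\frac{k_1c_2y}{2}$ (which your naive chaining, like the paper's proof, establishes), or to strengthen hypothesis \eqref{q2con} so that its second term already carries the $1/\log z$. As written, neither your plan nor the paper's proof delivers \eqref{kuhneq} exactly.
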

\begin{proof}
    From \eqref{rkeq}, we have
    \begin{equation}\label{rkeq2}
        r_{k_2}(A)\geq \sum_{\substack{n\in A\\ (n,P(z))=1}}w(n)=\sum_{\substack{n\in A\\ (n,P(z))=1}}1-\frac{1}{2}\sum_{\substack{n\in A\\ (n,P(z))=1}}\sum_{\substack{q \in \mathcal{P} \\z\leq q<y\\q^\ell\| n}}\ell.
    \end{equation}
    Now,
    \begin{equation*}
        \sum_{\substack{n\in A\\ (n,P(z))=1}}1=S(A,\PP,z).
    \end{equation*}
    Next, we split the remaining term in \eqref{rkeq2} into two parts as follows.
    \begin{equation*}
        \frac{1}{2}\sum_{\substack{n\in A\\ (n,P(z))=1}}\sum_{\substack{q \in \mathcal{P} \\ z\leq q<y\\q^\ell\| n}}\ell=\frac{1}{2}\sum_{\substack{n\in A\\ (n,P(z))=1}}\sum_{\substack{q \in \mathcal{P} \\ z\leq q<y\\q\mid n}}1+\frac{1}{2}\sum_{\substack{n\in A\\ (n,P(z))=1}}\sum_{\substack{q \in \mathcal{P} \\ z\leq q<y\\q^\ell\| n\\ \ell\geq 2}}(\ell-1).
    \end{equation*}
    The first of these double sums is simply
    \begin{equation*}
        \frac{1}{2}\sum_{\substack{n\in A\\ (n,P(z))=1}}\sum_{\substack{q \in \mathcal{P} \\ z\leq q<y\\q\mid n}}1=\frac{1}{2}\sum_{\substack{q \in \mathcal{P} \\ z\leq q<y}}S(A_q,\PP,z).
    \end{equation*}
    As for the second double sum, we note that
    \begin{equation*}
        \frac{1}{2}\sum_{\substack{n\in A\\ (n,P(z))=1}}\sum_{\substack{q \in \mathcal{P} \\ z\leq q<y\\q^\ell\| n\\ \ell\geq 2}}(\ell-1) < \frac{1}{2}\sum_{\substack{n\in A\\ (n,P(z))=1}}\sum_{\substack{q \in \mathcal{P} \\z\leq q<y\\q^2 | n}} k_1
    \end{equation*}
    since
    $$\ell-1 < \frac{\log n}{\log q} \leq \frac{\log X}{\log z}=k_1,$$
    and counting all $q^\ell \|n$ for $l \geq 2$  is equivalent to counting all $q^2 | n$. We then have
    \begin{equation*}
        \frac{1}{2}\sum_{\substack{n\in A\\ (n,P(z))=1}}\sum_{\substack{q \in \mathcal{P} \\z\leq q<y\\q^2 | n}} k_1 = \frac{k_1}{2} \sum_{\substack{q \in \mathcal{P} \\z\leq q<y}} |A_{q^2}|
    \end{equation*}
    and a direct application of condition \eqref{q2con} finishes the proof.
\end{proof}
\begin{remark}
    One could more generally define a weight function 
    \begin{equation*}
        w_b(a)=1-\frac{1}{b+1}\sum_{\substack{q\in\PP\\ z\leq q<y\\q^{\ell}\| a}}\ell,
    \end{equation*}
    where $b\geq 1$ is a fixed integer. This would instead allow at most $b$ prime factors less than $y$ and thereby give a lower bound for $r_{k_2+(b-1)}(A)$. However, for most applications (including ours), the choice $b=1$ is optimal.
\end{remark}
With a view to apply Lemma \ref{kuhnlem} for our application, we can use the lower bound on $S(A,\PP,z)$ in Lemma \ref{lowerlem}. Then, to bound the sum over $q$ in \eqref{kuhneq}, we will require the following lemmas.
\begin{lemma}[{\cite[\S 1.3.5, Lemma 1 (ii)]{greaves2013sieves}}]\label{sumintlem}
    Let $f(t)$ be a positive, monotone function defined for $z\leq t\leq y$ with $f'(t)$ piecewise continuous on $[z,y]$, and $c(n)$ be an arithmetic function satisfying 
    \begin{equation*}
        \sum_{x\leq n<w}c(n)\leq g(w)-g(x)+E,
    \end{equation*}
    for some constant $E$ whenever $z\leq x < w \leq y$. Then,
    \begin{equation*}
        \sum_{z\leq n<y}c(n)f(n)\leq\int_z^yf(t)g'(t)\mathrm{d}t+E\max(f(z),f(y)).
    \end{equation*}
\end{lemma}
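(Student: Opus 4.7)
The plan is to prove this lemma via Abel summation (equivalently, Riemann--Stieltjes integration by parts), the standard device for turning partial-sum bounds into bounds for weighted sums. The hypothesis amounts to a one-sided comparison $A(w) - A(x) \leq g(w) - g(x) + E$ for all $z \leq x < w \leq y$, where $A(t) := \sum_{z \leq n < t} c(n)$, so the problem reduces to converting this into a comparison between $\int f\,dA$ and $\int f\,dg$ with the additive error absorbed into a single boundary term.

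First I would record the partial summation identity
\[
\sum_{z \leq n < y} c(n) f(n) \;=\; A(y) f(y) \;-\; \int_z^y A(t) f'(t) \, dt,
\]
which holds because $A$ is a left-continuous step function with $A(z) = 0$ whose jumps at integers $n$ have size $c(n)$; the piecewise continuity of $f'$ makes the integral well-defined. I would then split into two cases according to the direction of monotonicity of $f$. When $f$ is increasing the term $-\int A(t) f'(t)\,dt$ needs a lower bound on $A(t)$, which the hypothesis supplies in the form $A(t) \geq A(y) - (g(y) - g(t)) - E$ by applying it on the interval $[t, y)$, combined with the global bound $A(y) \leq g(y) - g(z) + E$. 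When $f$ is decreasing one instead uses the direct upper bound $A(t) \leq g(t) - g(z) + E$. After a second integration by parts on $\int g(t) f'(t)\,dt$ to recover $\int f(t) g'(t)\,dt$, the residual $E$-contributions collapse either to $E f(y)$ or to $E f(z)$, giving exactly $E\max(f(z), f(y))$ in both cases.

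The main obstacle I anticipate is the bookkeeping of the $E$-term: one must orient the hypothesis (as a lower or upper bound on $A(t)$) so that the additive constant $E$ accumulates at the correct endpoint rather than being spread across the whole interval, and one must track boundary contributions from both integrations by parts. Once the correct orientation is chosen in each monotonicity case, the remaining calculations are short rearrangements, and the two cases combine into the single statement $\int_z^y f(t) g'(t)\,dt + E\max(f(z), f(y))$ claimed in the lemma.
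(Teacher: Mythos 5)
Your proposal is correct. The paper does not reproduce a proof of this lemma---it simply cites Greaves---so there is nothing to compare against, but your Abel-summation argument is complete and sound. The partial summation identity $\sum_{z\le n<y}c(n)f(n)=A(y)f(y)-\int_z^y A(t)f'(t)\,dt$ with $A(t)=\sum_{z\le n<t}c(n)$ is correctly set up (left-continuous, $A(z)=0$). In the increasing case, substituting $A(t)\ge A(y)-(g(y)-g(t))-E$ into $-\int A f'$, integrating $\int g f'$ by parts, and collecting terms leaves $f(z)\bigl(A(y)-g(y)+g(z)-E\bigr)+Ef(y)$, whose first summand is $\le 0$ by the hypothesis with $x=z$, $w=y$ together with $f(z)>0$; the decreasing case is symmetric with $A(t)\le g(t)-g(z)+E$ producing $Ef(z)$. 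Both outcomes match $E\max(f(z),f(y))$, so the argument closes cleanly.
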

\begin{lemma}[{\cite[Theorem 7]{rosser1962approximate}}]\label{Vlem}
    For all $x>1$,
    \begin{equation*}
        \prod_{p\leq x}\left(1-\frac{1}{p}\right)<\frac{e^{-\gamma}}{\log x}\left(1+\frac{1}{2(\log x)^2}\right).
    \end{equation*}
\end{lemma}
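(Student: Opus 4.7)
The plan is to adopt the classical Rosser--Schoenfeld strategy, which handles the product $\prod_{p\leq x}(1-1/p)$ by taking logarithms and appealing to explicit forms of Mertens' theorems. First, for $x\geq 2$ I would write
\[
-\log\prod_{p\leq x}\left(1-\frac{1}{p}\right)=\sum_{p\leq x}\frac{1}{p}+\sum_{p\leq x}\sum_{k\geq 2}\frac{1}{kp^k}.
\]
Splitting the double tail as $\sum_{p}\sum_{k\geq 2}\frac{1}{kp^k}-\sum_{p>x}\sum_{k\geq 2}\frac{1}{kp^k}$, the first piece is an absolutely convergent constant and the second is explicitly of size $O(1/x)$. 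The constant is pinned down via the Mertens-constant identity $M=\gamma+\sum_p\bigl[\log(1-1/p)+1/p\bigr]$, where $M$ denotes Mertens' constant.

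Next, I would invoke an explicit version of Mertens' first theorem,
\[
\sum_{p\leq x}\frac{1}{p}=\log\log x + M + E(x),
\]
with $|E(x)|$ bounded by an explicit decreasing function of $x$. Such a bound is obtained by partial summation from an explicit estimate for the Chebyshev function $\theta(x)$, which in turn is a consequence of Rosser--Schoenfeld's explicit zero-free region for $\zeta(s)$. Combining the displayed formulas yields
\[
-\log\prod_{p\leq x}\left(1-\frac{1}{p}\right)=\gamma+\log\log x + E'(x),
\]
with an explicit bound on $|E'(x)|$. Exponentiating and using $e^t\leq 1+t+t^2$ for small $|t|$ converts the additive error into a multiplicative factor of the form $1+\frac{1}{2(\log x)^2}$, as required.

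The statement must hold for every $x>1$, not merely asymptotically. For $1<x<2$ the product on the left is empty, so the claim reduces to $1<\frac{e^{-\gamma}}{\log x}\bigl(1+\frac{1}{2(\log x)^2}\bigr)$, which is immediate since the right-hand side blows up as $x\to 1^+$. For $x$ in a bounded intermediate range a direct numerical verification suffices, with the asymptotic argument above taking over beyond an explicit threshold.

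The main obstacle I expect is calibrating $|E'(x)|$ sharply enough: the constant $\tfrac12$ in the correction $\frac{1}{2(\log x)^2}$ is sharp, and recovering it requires both the strongest available explicit $\theta(x)$ bounds and a careful three-way split between small, moderate, and large $x$, together with monotonicity tricks to patch the regimes at their boundaries. Balancing these regimes is essentially the entire technical core of Rosser--Schoenfeld's original argument.
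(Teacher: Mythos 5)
The paper does not prove this lemma; it is cited verbatim from Rosser--Schoenfeld's 1962 paper (their Theorem 7). Your high-level plan --- take logarithms, reduce to an explicit Mertens' first theorem, then exponentiate --- is the right family of ideas, and you correctly dispatch $1<x<2$ as vacuous.

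However, the exponentiation step as written does not close the argument at the claimed constant. In your notation,
\[
\prod_{p\leq x}\left(1-\frac{1}{p}\right)=\frac{e^{-\gamma}}{\log x}\,\exp\bigl(T(x)-E(x)\bigr),\qquad T(x):=\sum_{p>x}\sum_{k\geq 2}\frac{1}{kp^k}>0,
\]
so the desired upper bound is equivalent to $E(x)-T(x)>-\log\bigl(1+\tfrac{1}{2(\log x)^2}\bigr)$. But the explicit Mertens-sum bound you would cite from the same Rosser--Schoenfeld paper gives only $E(x)>-\tfrac{1}{2(\log x)^2}$, and since $\log(1+u)<u$ for $u>0$, one has $-\tfrac{1}{2(\log x)^2}-T(x)<-\log\bigl(1+\tfrac{1}{2(\log x)^2}\bigr)$ for every $x$, so the chain of inequalities points the wrong way. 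In other words, the positive tail $T(x)$ enters with an \emph{unfavorable} sign for this upper bound, and the Mertens-sum error term is already exactly the size you would need it to be strictly smaller than. The inequality $e^{t}\leq 1+t+t^{2}$ cannot repair this: for positive $t$ one also has $e^{t}>1+t$, which cuts against you. You anticipate this difficulty in your final paragraph, but the sketch leaves the decisive step --- producing an intermediate estimate whose error is genuinely below $\tfrac{1}{2(\log x)^2}$ before exponentiating, which Rosser and Schoenfeld accomplish via a finer decomposition tied directly to $\theta(x)$ together with a bounded-range numerical verification --- as an acknowledged obstacle rather than a step that has actually been reduced to routine calculation.
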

\begin{lemma}[{\cite[Corollary 1]{vanlalngaia2017explicit}}]\label{mertenintlem}
    For any $b>a>1000$,
    \begin{equation*}
        \sum_{a\leq p< b}\frac{1}{p}<\log\log b-\log\log a+\frac{5}{(\log a)^3}.
    \end{equation*}
\end{lemma}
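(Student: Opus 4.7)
The plan is to derive this bound from an explicit version of Mertens' second theorem. The starting point is an asymptotic of the shape
$$\sum_{p \leq x} \frac{1}{p} = \log\log x + M + \varepsilon(x),$$
where $M$ is the Meissel--Mertens constant and $\varepsilon(x)$ admits an explicit bound $|\varepsilon(x)| \leq c/(\log x)^3$ valid for $x$ above some threshold. Subtracting the estimates at $x = b$ and $x = a$ telescopes the constant $M$ and yields
$$\sum_{a \leq p < b} \frac{1}{p} = \log\log b - \log\log a + \varepsilon(b) - \varepsilon(a).$$
The triangle inequality together with $b > a$ then reduces the whole problem to showing that $|\varepsilon(b)| + |\varepsilon(a)| \leq 5/(\log a)^3$, i.e.\ that $2c \leq 5$ in the relevant range.

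To obtain a suitable bound on $\varepsilon(x)$, I would begin with a contemporary explicit estimate for the Chebyshev function, say $|\vartheta(x) - x| \leq \eta(x)$, from a source such as Dusart or Platt--Trudgian. Applying partial summation converts this into a bound on $\sum_{p \leq x} (\log p)/p - \log x + \gamma_0$ for an appropriate constant $\gamma_0$; a second application of partial summation then transfers the estimate to $\sum_{p \leq x} 1/p - \log\log x - M$. Each transfer gains a factor of $1/\log x$, so that an input error of size $x/(\log x)^2$ in $\vartheta$ propagates to an error of size $1/(\log x)^3$ in the prime-reciprocal sum, which is precisely the shape required.

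The main obstacle is the numerical calibration: one must track constants carefully through two rounds of partial summation so that the combined constant is at most $5$ rather than something weaker like $10$ or $20$. This is likely to require choosing a particularly sharp explicit bound on $\vartheta(x) - x$, exploiting the hypothesis $a > 1000$ to absorb lower-order tail contributions into the leading $5/(\log a)^3$ term, and possibly splitting into two regimes: a direct numerical verification of the inequality on an initial range $1000 < a \leq a_0$ (to handle transitional cases where the asymptotic constants have not yet stabilised), combined with the asymptotic argument for $a > a_0$.
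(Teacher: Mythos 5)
There is no proof in the paper to compare against: Lemma~\ref{mertenintlem} is quoted verbatim from Corollary~1 of Vanlalngaia's explicit Mertens-sums paper and used as a black box. Your sketch is a plausible reconstruction of the kind of argument that underlies such a corollary, and the overall plan — telescope the Meissel--Mertens constant between $a$ and $b$, bound the two error terms, absorb boundary and transitional contributions into the constant $5$, and check a finite initial range directly if the asymptotic bound does not yet hold — is sound in outline.

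However, the power-counting claim at the centre of your sketch is wrong. You assert that each application of partial summation ``gains a factor of $1/\log x$,'' so that $\vartheta(x)-x\ll x/(\log x)^2$ would already give the required $1/(\log x)^3$ error for $\sum_{p\le x}1/p$. In fact, starting from $|\vartheta(x)-x|\le Cx/(\log x)^A$, the first transfer to $\sum_{p\le x}(\log p)/p$ has error dominated by the tail integral
\begin{equation*}
    \int_x^\infty\frac{C\,\mathrm{d}t}{t(\log t)^A}=\frac{C}{(A-1)(\log x)^{A-1}},
\end{equation*}
which is a \emph{loss} of one power of $\log$. The second transfer, from $\sum(\log p)/p$ to $\sum 1/p$, then gains that power back (the dominant contribution is $\int_x^\infty \delta(t)\,\mathrm{d}t/(t(\log t)^2)$ with $\delta\ll(\log t)^{-(A-1)}$). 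The net effect is that the exponent $A$ is preserved, not increased by one per transfer. So to reach $1/(\log a)^3$ you need a Chebyshev input of strength $\vartheta(x)-x\ll x/(\log x)^3$, strictly stronger than the $x/(\log x)^2$ you propose, which would only yield $1/(\log a)^2$. You should also handle the boundary terms created by the half-open sum $\sum_{a\le p<b}$ explicitly: whether $a$ and $b$ are themselves prime contributes up to an extra $1/a$ to the difference of complete sums, and for $a$ just above $1000$ one has $(\log a)^3/a\approx 0.33$, so this is not negligible relative to the constant $5$ and must be absorbed into it.
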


We now give an upper bound for the sum over $S(A_q,\PP,z)$ that is of the same order as our lower bound for $S(A,\PP,z)$ in Lemma \ref{lowerlem}.  

\begin{proposition} \label{upperprop}
    Let $A=A(N)$ be as in \eqref{Adef}, $X=\lfloor N+2\sqrt{N}\rfloor$, and $\mathcal{P}$ be the set of all primes. Let $y$ and $z$ be as in \eqref{zyeq} and assume that $y>z>1000$ and $2<k_1\leq 8$. Let $\alpha$ be a real number such that
    \begin{equation}\label{alphabounds}
        0<\alpha<\frac{1}{2}-\frac{1}{k_1}-\frac{1}{k_2}
    \end{equation}
    and set
    \begin{equation*}
        k_{\alpha }=k_1\left(\frac{1}{2}-\frac{1}{k_2}-\alpha \right).
    \end{equation*}
    Then, with $Q$, $\varepsilon$ and $C_1(\varepsilon)$ as in Lemma \ref{linsievelem}, we have
    \begin{equation*}
        \sum_{z\leq q<y}S(A_q,\PP,z)\leq k_1e^{-\gamma}\left(1+\frac{k_1^2}{2(\log X)^2}\right)\left(M_1(X)+M_2(X)\right)+\mathcal{E}(X),
    \end{equation*}
    where
    \begin{align}
        M_1(X):=&\frac{2\sqrt{N}}{\log X}\left[\frac{2e^{\gamma}}{k_1}\left(\frac{\log\left(\frac{k_1-2k_1\alpha -2}{k_2-2k_2\alpha -2}\right)}{\left(\frac{1}{2}-\alpha \right)}+\frac{5k_1^4}{k_{\alpha }(\log X)^3}\right)\right.\notag\\
        &\qquad\qquad\qquad\left.+\varepsilon C_1(\varepsilon)e^2h(k_{\alpha })\left(\log\left(\frac{k_1}{k_2}\right)+\frac{5k_1^3}{(\log X)^3}\right)\right],\label{m1def}
    \end{align}
    \begin{equation}\label{m2def}
        M_2(X):=\frac{y}{\log X}\left(\frac{2e^{\gamma}}{k_{\alpha }}+\varepsilon C_1(\varepsilon)e^2h(k_{\alpha })\right)
    \end{equation}
    and
    \begin{equation}\label{Edef}
        \mathcal{E}(X):=QX^{\frac{1}{2}-\alpha }\left(\log\left(\frac{k_1}{k_2}\right)+\frac{5k_1^3}{(\log X)^3}\right).
    \end{equation}
\end{proposition}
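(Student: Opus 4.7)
The plan is to apply the linear sieve upper bound from Lemma~\ref{linsievelem} individually to each set $A_q$ for $q$ prime in $[z,y)$, and sum the resulting bounds over $q$. For each such $q$ I take $g(d)=1/d$ and choose the sieve level
\[
D_q:=\frac{X^{1/2-\alpha}}{q},
\]
so that $qD_q=X^{1/2-\alpha}$ is uniform in $q$. The associated sieve parameter
\[
s_q:=\frac{\log D_q}{\log z}=k_1\Bigl(\tfrac12-\alpha-\tfrac{\log q}{\log X}\Bigr)
\]
satisfies $k_\alpha\le s_q\le k_1(\tfrac12-\alpha)-1$ on the range $z\le q<y$. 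The upper constraint in \eqref{alphabounds} forces $D_q\ge z$ (required by Lemma~\ref{linsievelem}), while $k_1\le 8$ together with $\alpha>0$ keeps $s_q<3$, so that Lemma~\ref{flem} gives $F(s_q)=2e^\gamma/s_q$. Since $F$ is decreasing on $[1,3]$ and $h$ is non-increasing on $[1,\infty)$, one has $F(s_q)\le F(k_\alpha)$ and $h(s_q)\le h(k_\alpha)$.

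The central observation for the remainder is that $a\mapsto a/q$ identifies $A_q$ with the integers in an interval of length $2\sqrt N/q$; this yields $|A_q|\le 2\sqrt N/q+1$ and $|r_q(d)|\le 1$ for every squarefree $d\mid P(z)$ (noting $(d,q)=1$ since all prime factors of $d$ are $<z\le q$), whence $\sum_{d<QD_q}|r_q(d)|\le QD_q$. Substituting into Lemma~\ref{linsievelem}, splitting $|A_q|\le 2\sqrt N/q+1$, and invoking $V(z)\le(e^{-\gamma}/\log z)(1+1/(2(\log z)^2))$ from Lemma~\ref{Vlem} (with $\log z=(\log X)/k_1$ producing the shared prefactor $k_1e^{-\gamma}(1+k_1^2/(2(\log X)^2))$), one reaches
\begin{align*}
\sum_{z\le q<y}S(A_q,\PP,z)
&\le V(z)\cdot 2\sqrt N\sum_q\frac{1}{q}\bigl[F(s_q)+\varepsilon C_1(\varepsilon)e^2h(s_q)\bigr]\\
&\quad+V(z)\sum_q\bigl[F(s_q)+\varepsilon C_1(\varepsilon)e^2h(s_q)\bigr]+Q\sum_q D_q,
\end{align*}
and the three sums will yield $M_1(X)$, $M_2(X)$ and $\mathcal E(X)$ respectively.

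The heart of the computation is the sum $\sum_q(1/q)F(s_q)$. I handle it with Lemma~\ref{sumintlem}, taking $c(n)=1/n$ on primes (and $0$ otherwise), $g(w)=\log\log w$ and $E=5/(\log z)^3$, which is valid by Lemma~\ref{mertenintlem}. Writing $s(t)=((\tfrac12-\alpha)\log X-\log t)/\log z$, the integral $\int_z^y(2e^\gamma/s(t))\,dt/(t\log t)$ splits via the partial-fraction identity $1/(ab)=(1/(a+b))(1/a+1/b)$ with $a=\log t$ and $b=(\tfrac12-\alpha)\log X-\log t$; after the substitution $u=\log t$ it collapses in closed form to
\[
\frac{2e^\gamma\log z}{(\tfrac12-\alpha)\log X}\log\!\left(\frac{k_1-2k_1\alpha-2}{k_2-2k_2\alpha-2}\right),
\]
which is precisely the leading logarithmic term of $M_1$. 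The companion $E\cdot\max(f(z),f(y))$ term from Lemma~\ref{sumintlem} equals $E\cdot F(k_\alpha)$ (the maximum is at $t=y$ since $s(t)$ is decreasing in $t$), producing the $5k_1^4/(k_\alpha(\log X)^3)$ correction in $M_1$. The $h$-portion of the first sum simply uses $h(s_q)\le h(k_\alpha)$ together with $\sum_q 1/q\le\log(k_1/k_2)+5k_1^3/(\log X)^3$ from Lemma~\ref{mertenintlem}.

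The second sum, producing $M_2(X)$, I dispatch by combining $F(s_q)\le F(k_\alpha)=2e^\gamma/k_\alpha$, $h(s_q)\le h(k_\alpha)$ and the trivial count $|\{q\text{ prime}:z\le q<y\}|\le y$. The remainder contribution is $Q\sum_q D_q=QX^{1/2-\alpha}\sum_q 1/q$, and a final application of Lemma~\ref{mertenintlem} delivers $\mathcal E(X)$. The main obstacle I anticipate is the partial-fraction integration yielding the exact logarithm in $M_1$, together with bookkeeping the various prefactors carefully—in particular the $(1+k_1^2/(2(\log X)^2))$ factor from $V(z)$ and the $k_1/\log X$ scaling from $\log z=(\log X)/k_1$, which conspire cleanly with $F(s_q)=2e^\gamma/s_q$ to cancel the $e^\gamma$'s and rescale into $M_1$, $M_2$ and $\mathcal E$.
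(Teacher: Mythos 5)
Your proposal follows essentially the same route as the paper's proof: the same choice of sieve level $D_q=X^{1/2-\alpha}/q$ and parameter $s_q=\log D_q/\log z$, the same remainder estimate $\sum_q\sum_{d<QD_q}|r_q(d)|\le QX^{1/2-\alpha}\sum_q 1/q$, the same split $|A_q|\le 2\sqrt N/q+1$ producing $M_1$ and $M_2$, and the same application of Lemma~\ref{sumintlem} with $c(n)=1/n$ on primes, $g(w)=\log\log w$, $E=5/(\log z)^3$ to evaluate $\sum_q F(s_q)/q$ (your partial-fraction reduction gives exactly the paper's closed-form integral). The only point you leave implicit is that $k_\alpha>1$ (which follows immediately from \eqref{alphabounds}), so that $s_q\in(1,3)$ and Lemma~\ref{flem} indeed applies; apart from that minor note, the argument is correct and identical in structure to the paper's.
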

\begin{proof}
    We start by defining 
    \begin{equation} \label{eq: def-Q-D2-D2q-sq}
        D_q:=\frac{X^{\frac{1}{2}-\alpha }}{q}\quad\text{and}\quad s_q:=\frac{\log D_q}{\log z}.
    \end{equation}
    Now, for any prime $q\in[z,y)$, let
    \begin{equation*}
        r_q(d)=|A_{qd}|-\frac{|A_q|}{d}.
    \end{equation*}
    By \eqref{suppereq} in Lemma \ref{linsievelem}, and Lemma \ref{Vlem},
    \begin{align}
        \sum_{z\leq q<y}S(A_q,\PP,z)&< k_1 e^{-\gamma}\left(1+\frac{k_1^2}{2(\log X)^2}\right)\sum_{z\leq q<y}|A_q|\left(\frac{F(s_q)+\varepsilon C_1(\varepsilon)e^2h(s_q)}{\log X}\right)\notag\\
        &\qquad\qquad +\sum_{z\leq q<y}\sum_{\substack{d\mid P(z)\\d<QD_q}}|r_q(d)|.\label{SAqexpandeq}
    \end{align}
    Here, we note that $s_q>1$ by \eqref{alphabounds}. We start by bounding the remainder term in \eqref{SAqexpandeq}. Since $A$ is a set of consecutive integers, $|r_q(d)|\leq 1$ and we have
    \begin{equation*}
        \sum_{z\leq q<y}\sum_{\substack{d\mid P(z)\\d<QD_q}}|r_q(d)|\leq QX^{\frac{1}{2}-\alpha }\sum_{z\leq q<y}\frac{1}{q}.
    \end{equation*}
    By Lemma \ref{mertenintlem}, this is bounded above by
    \begin{equation}\label{remainderbound}
        QX^{\frac{1}{2}-\alpha }\left(\log\left(\frac{\log y}{\log z}\right)+\frac{5}{(\log z)^3}\right)=QX^{\frac{1}{2}-\alpha }\left(\log\left(\frac{k_1}{k_2}\right)+\frac{5k_1^3}{(\log X)^3}\right)=\mathcal{E}(X).
    \end{equation}
    We now bound the main term in \eqref{SAqexpandeq}. To begin with, we note that 
    \begin{equation}\label{Aqbound}
        |A_q|\leq\frac{2\sqrt{N}}{q}+1.
    \end{equation}
    Moreover, since $k_1\leq 8$, we have $s_q<3$ so that by Lemma \ref{flem}
    \begin{equation}\label{Fsqeq}
        F(s_q)=\frac{2e^{\gamma}}{s_q}=\frac{2e^{\gamma}\log X}{k_1\log D_q}.
    \end{equation}
    By \eqref{Aqbound} and \eqref{Fsqeq}, along with the fact that $s_q\geq k_{\alpha }$ and $D_q\geq D_y$,
    \begin{align}
        &\sum_{z\leq q<y}|A_q|\left(\frac{F(s_q)+\varepsilon C_1(\varepsilon)e^2h(s_q)}{\log X}\right)\notag\\
        &\qquad\leq 2\sqrt{N}\sum_{z\leq q<y}\frac{1}{q}\left(\frac{2e^{\gamma}}{k_1\log D_q}+\frac{\varepsilon C_1(\varepsilon)e^2h(k_{\alpha })}{\log X}\right)\notag\\
        &\qquad\qquad\qquad\qquad\qquad+\sum_{z\leq q<y}\left(\frac{2e^{\gamma}}{k_1\log D_{y}}+\frac{\varepsilon C_1(\varepsilon)e^2h(k_{\alpha })}{\log X}\right).\label{SAqmain}
    \end{align}
    The second term in \eqref{SAqmain} can be simply bounded as
    \begin{equation}\label{secondSAq}
        \sum_{z\leq q<y}\left(\frac{2e^{\gamma}}{k_1\log D_{y}}+\frac{\varepsilon C_1(\varepsilon)e^2h(k_{\alpha })}{\log X}\right)\leq \frac{y}{\log X}\left(\frac{2e^{\gamma}}{k_{\alpha }}+\varepsilon C_1(\varepsilon)e^2h(k_{\alpha })\right)=M_2(X),
    \end{equation}
    since
    \begin{equation*}
        D_y=X^{\frac{1}{2}-\alpha -\frac{1}{k_2}}.
    \end{equation*}
    For the first term in \eqref{SAqmain} we again note that
    \begin{equation}\label{merteninteq}
        \sum_{z\leq q<y}\frac{1}{q}<\log\left(\frac{k_1}{k_2}\right)+\frac{5k_1^3}{(\log X)^3}
    \end{equation}
    by Lemma \ref{mertenintlem}. We then evaluate the sum
    \begin{equation*}
        \sum_{z\leq q<y}\frac{1}{qD_q}
    \end{equation*}
    using Lemma \ref{sumintlem}. In particular, applying Lemma \ref{sumintlem} with $f(t)=1/\log(D_t)$, $g(t)=\log\log t$, 
    \begin{equation*}
        c(n)=
        \begin{cases}
            1/n,&\text{if $n$ is prime},\\
            0,&\text{otherwise}
        \end{cases}
    \end{equation*}
    and $E=5k_1^3/(\log X)^3$ (as a consequence of Lemma \ref{mertenintlem}),
    \begin{align}
        \sum_{z \le q < y}\frac{1}{q\log D_q}&\le\int_z^y\frac{1}{t\log t \log D_t}\mathrm{d}t+\frac{5k_1^3}{(\log X)^3}\frac{1}{\log D_y}\notag\\
        & = \frac{\log\left(\frac{k_1-2k_1\alpha -2}{k_2-2k_2\alpha -2}\right)}{\left(\frac{1}{2}-\alpha \right) \log X}+\frac{5k_1^4}{k_{\alpha }(\log X)^4},\label{logdtint}
    \end{align}
    where we substituted $z=X^{1/k_1}$, $y=X^{1/k_2}$ and $D_q=X^{1/2-\alpha }/q$ to obtain the final equality. Using \eqref{merteninteq} and \eqref{logdtint} we therefore have
    \begin{align}\label{firstSAq}
        &2\sqrt{N}\sum_{z\leq q<y}\frac{1}{q}\left(\frac{2e^{\gamma}}{k_1\log D_q}+\frac{\varepsilon C_1(\varepsilon)e^2h(k_{\alpha })}{\log X}\right)\notag\\
        &\qquad<\frac{2\sqrt{N}}{\log X}\left[\frac{2e^{\gamma}}{k_1}\left(\frac{\log\left(\frac{k_1-2k_1\alpha -2}{k_2-2k_2\alpha -2}\right)}{\left(\frac{1}{2}-\alpha \right)}+\frac{5k_1^4}{k_{\alpha }(\log X)^3}\right)\right.\notag\\
        &\qquad\qquad\qquad\qquad\left.+\varepsilon C_1(\varepsilon)e^2h(k_{\alpha })\left(\log\left(\frac{k_1}{k_2}\right)+\frac{5k_1^3}{(\log X)^3}\right)\right]=M_1(X).
    \end{align}
    Combining \eqref{secondSAq} and \eqref{firstSAq} bounds \eqref{SAqmain} by $M_1(X)+M_2(X)$. Combining this with our estimate $\mathcal{E}(X)$ for the remainder term in \eqref{SAqexpandeq} then proves the proposition. 
\end{proof}

\section{Proof of Theorem \ref{mainthm}}\label{mainsect}
We now collate all our results to prove Theorem \ref{mainthm}. First however, we will give an explicit version of the condition \eqref{q2con} for our application. Throughout we choose $z=X^{1/8}$ and $y=X^{1/4}$ ($k_1=8$ and $k_2=4$). Although one could optimise further over the choice of $z=X^{1/k_1}$, we found $z=X^{1/8}$ to be a simple choice sufficient for our purposes.
\begin{lemma}\label{q2conlem}
    Let $A=A(N)$ be as in \eqref{Adef}, $\mathcal{P}$ be the set of all primes, $X~=~\max(A)$, $z=X^{1/8}$ and $y=X^{1/4}$. Then, for $N>1.98\cdot 10^{28}$,
    \begin{equation}\label{ourq2con}
        \sum_{\substack{z\leq q<y\\ q\in\mathcal{P}}}|A_{q^2}|\leq c_1\frac{|A|\log|A|}{z}+c_2y,
    \end{equation}
    with $c_1=0.01$ and $c_2=0.07$.
\end{lemma}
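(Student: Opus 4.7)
The plan is to bound $|A_{q^2}|$ directly using the interval structure of $A$, and then to control the two resulting sub-sums with explicit estimates for $\pi(x)$. Since $A$ is an interval of length $2\sqrt{N}$, we have $|A_{q^2}|\le 2\sqrt{N}/q^2+1$, so
\begin{equation*}
    \sum_{z\le q<y}|A_{q^2}| \;\le\; 2\sqrt{N}\sum_{z\le q<y}\frac{1}{q^2} \;+\; \bigl(\pi(y)-\pi(z)\bigr).
\end{equation*}
The first piece is intended to produce the $c_1|A|\log|A|/z$ term on the right of \eqref{ourq2con}, while the second piece produces the $c_2y$ term.

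For the $\pi(y)-\pi(z)\le\pi(y)$ contribution, I would invoke an explicit upper bound for $\pi$ such as Dusart's $\pi(x)\le x/(\log x-1)$ (valid for $x\ge 5393$). The hypothesis $N>1.98\cdot 10^{28}$ forces $y=X^{1/4}>1.18\cdot 10^{7}$ and $\log y>16.2$, so $\pi(y)/y<1/15.2<0.066<c_2$, with comfortable slack.

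For $\sum_{z\le q<y}1/q^2$, I would use Abel summation together with an explicit bound of the shape $\pi(t)\le 1.25506\,t/\log t$. Since $\int_z^y dt/(t^2\log t)\le 1/(z\log z)$, this yields $\sum_{z\le q<y}1/q^2\le C/(z\log z)$ for an explicit absolute constant $C$ (roughly $2.5$). Using $|A|\ge 2\sqrt{N}-1$ and $\log|A|\ge\tfrac{1}{2}\log X + O(1)$, the required inequality $2\sqrt{N}\,C/(z\log z)\le c_1|A|\log|A|/z$ reduces to a lower bound of the shape $\log z\cdot\log|A|\gtrsim 100C$. Since $z=X^{1/8}$ and $\log X>65$ under our hypothesis, we obtain $\log z\cdot\log|A|\gtrsim 8.1\cdot 33.2\approx 270$, leaving ample room for $c_1=0.01$.

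The main obstacle is purely quantitative: $c_1=0.01$ is somewhat tight given the explicit constant emerging from the partial-summation step, so one must be careful in the choice of $\pi$-bound (Rosser--Schoenfeld versus Dusart) and in tracking boundary terms such as $\pi(y)/y^2$. No structural difficulty is anticipated.
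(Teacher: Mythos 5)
Your proposal follows essentially the same route as the paper: bound $|A_{q^2}|\le |A|/q^2+1$, split the resulting sum into a $\sum 1/q^2$ piece (which supplies the $c_1$ term) and a prime-count piece (which supplies the $c_2$ term), and then close with explicit estimates. The only cosmetic differences are that the paper cites a packaged bound $\sum_{z\le q<y}1/q^2\le 2.22/(z\log z)$ (Glasby's Lemma 7) whereas you derive a comparable bound ($C\approx 2.51$) by Abel summation against a Rosser--Schoenfeld bound for $\pi$, and the paper uses $\pi(y)<1.1\,y/\log y$ rather than Dusart; your slightly larger constant still clears $c_1=0.01$ since $\log z\cdot\log|A|>270>251$, so the argument goes through.
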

\begin{proof}
    Since $A$ is a set of consecutive integers, 
    \begin{eqnarray}
        \sum_{\substack{z \leq q < y \\ q \in \mathcal{P}}}|A_{q^2}| & \leq & \sum_{\substack{z \leq q < y \\ q \in \mathcal{P}}} \bigg(\frac{|A|}{q^2}+1\bigg) \notag\\
        & = & |A| \sum_{\substack{z \leq q < y \\ q \in \mathcal{P}}} \frac{1}{q^2} +\sum_{\substack{z \leq q < y \\ q \in \mathcal{P}}} 1 \notag\\
        & \leq & \frac{2.22 |A|}{z \log z} + \pi(y)\label{finalAq2eq}
    \end{eqnarray}
    where the last line follows from \cite[Lemma 7]{glasby2021most}. Now $X=\lfloor N+2\sqrt{N}\rfloor$, so we have
    \begin{equation}\label{zrange}
        z \geq (1.98\cdot 10^{28} + 2 \sqrt{1.98\cdot 10^{28}})^{1/8} > 3444
    \end{equation}
    and
    $$y \geq (1.98\cdot 10^{28} + 2 \sqrt{1.98\cdot 10^{28}})^{1/4} > 10^{7}.$$
    In such a range for $y$, we use \cite[Theorem 1]{rosser1962approximate} to get that
    $$\pi(y) < 1.1 \frac{y}{\log y}.$$ Hence, we can take
    $$c_2 = 0.07 > \frac{1.1}{\log (10^7)}$$
    in \eqref{ourq2con}. Then, from \eqref{finalAq2eq}, a permissible $c_1$ in \eqref{ourq2con} is such that
    $$\frac{2.22 |A|}{z \log z}\leq \frac{c_1 |A| \log |A|}{z}$$
    which rearranges to
    $$c_1 \geq \frac{2.22}{\log |A| \log z}.$$
    Therefore, using \eqref{zrange} and the fact that $|A|\geq 2\sqrt{N}-1$, we may take
    $$c_1 = 0.01$$
    as claimed.
\end{proof}
Using these values of $c_1$ and $c_2$, Theorem \ref{mainthm} can now be deduced from Lemma  \ref{kuhnlem} combined with Lemmas \ref{lowerlem} and \ref{upperprop}.
\begin{proof}[Proof of Theorem \ref{mainthm}]
    Let $A=A(N)$ be as in \eqref{Adef}, $\mathcal{P}$ be the set of all primes and $X=\max(A)=\lfloor N+2\sqrt{N}\rfloor$.  It suffices to show that $r_4(A)$, defined in \eqref{rkdef}, is positive for all $N>1.98\cdot 10^{28}$, since Lemma \ref{complem} covers $N\leq 1.98\cdot 10^{28}$. Let $z=X^{1/8}$ and $y=X^{1/4}$. Substituting Lemma \ref{q2conlem} into Lemma \ref{kuhnlem} gives 
    \begin{equation}\label{r4boundfirst}
        r_4(A)\geq S(A,\PP,z)-\frac{1}{2}\sum_{\substack{q\in\PP\\z\leq q<y}}S(A_q,\PP,z)-\frac{0.08 \sqrt{N}\log(2\sqrt{N})}{z} -\frac{0.56 y}{2 \log X},
    \end{equation}
    noting that $|A|\leq 2\sqrt{N}$. Here, for $N>1.98\cdot 10^{28}$,
    \begin{equation*}
        \frac{0.08 \sqrt{N}\log(2\sqrt{N})}{z}+\frac{0.56 y}{2 \log X}\leq\frac{0.051\sqrt{N}}{\log X}        
    \end{equation*}
    so we can reduce \eqref{r4boundfirst} to the simpler bound
    \begin{equation}\label{r4bound}
         r_4(A)\geq S(A,\PP,z)-\frac{1}{2}\sum_{\substack{q\in\PP\\z\leq q<y}}S(A_q,\PP,z)-\frac{0.051\sqrt{N}}{\log X}.
    \end{equation}
    We will now bound the first two terms in \eqref{r4bound} separately.\\

    \textbf{Lower bound for $S(A,\mathcal{P},z)$}: We begin by bounding $S(A,\mathcal{P},z)$. By Lemma \ref{lowerlem}, and the notation therein,
    \begin{equation}\label{lowereq2}
        S(A,\PP,z)>\frac{e^{-\gamma}(2\sqrt{N}-1)}{\log z} \left(1-\frac{1}{2 (\log z)^2}\right)\cdot(f(s)-\varepsilon C_2(\varepsilon)e^2h(s))-\sum_{d<QD}\mu^2(d),
    \end{equation}
    provided $D\geq z^2$ and $f(s)>\varepsilon C_2(\varepsilon)e^2h(s)$. Now,
    \begin{equation}\label{lowerbound1}
        \frac{e^{-\gamma}(2\sqrt{N}-1)}{\log z} \left(1-\frac{1}{2 (\log z)^2}\right)>1.1\frac{\sqrt{N}}{\log z}=8.8\frac{\sqrt{N}}{\log X}
    \end{equation}
    using that $N>1.98\cdot 10^{28}$, and
    \begin{equation}\label{zbound}
        z=X^{1/8}\geq (1.98\cdot 10^{28}+2\sqrt{1.98\cdot 10^{28}})^{1/8}>3444.
    \end{equation}
    Next, by Proposition \ref{PrimeProductBound}, we may take $Q=2$ and $\varepsilon=1.97\cdot 10^{-3}$ in \eqref{lowereq2}, and $C_2(\varepsilon)=122$ by \cite[Table 1]{BJV24}. Let $D\in[z^3,z^4]$ to be determined later. By Lemma~\ref{flem},
    \begin{equation*}
        f(s)=\frac{2e^{\gamma}\log(s-1)}{s}.
    \end{equation*} 
    noting that $s:=\log D/\log z\in [3,4]$. Hence, by the definition \eqref{eq-def-h(s)} of $h(s)$,
    \begin{equation}\label{csbound}
        f(s)-\varepsilon C_2(\varepsilon)e^2h(s)>\frac{2e^{\gamma}\log(s-1)}{s}-(1.97\cdot 10^{-3})\cdot 122e^2\cdot\frac{3e^{-s}}{s}>C(s),
    \end{equation}
    where
    \begin{equation}\label{cseq}
        C(s):=\frac{2e^{\gamma}\log(s-1)-0.73e^{2-s}}{s},
    \end{equation}
    which is positive for all $3\leq s\leq 4$. Thus, substituting \eqref{lowerbound1}, \eqref{csbound} and $Q=2$ into \eqref{lowereq2}, we have
    \begin{equation}\label{SAPzbound2}
        S(A,\mathcal{P},z)>  C(s)\frac{8.8\sqrt{N}}{\log X}-\sum_{d<2D}\mu^2(d).
    \end{equation}
    To bound the sum over $\mu^2(d)$, we note that by \cite[Lemma 4.6]{ramare2013explicit},
    \begin{equation}\label{buthebound}
        \sum_{n\leq x}\mu^2(n)\leq\frac{6}{\pi^2}x+0.5\sqrt{x}
    \end{equation}
    for all $x\geq 10$. Now, by \eqref{zbound},
    \begin{equation*}
        D\geq z^3 > 4\cdot 10^{10}.
    \end{equation*}
    Therefore, \eqref{buthebound} gives
    \begin{equation*}
        \sum_{d<2D}\mu^2(d)< 1.216D=1.216X^{s/8},
    \end{equation*}
    which, substituted into \eqref{SAPzbound2}, yields
    \begin{equation}\label{finalSApz}
        S(A,\mathcal{P},z)>C(s)\frac{8.8\sqrt{N}}{\log X}-1.216X^{s/8}.
    \end{equation}
    \ \\ \\
    \textbf{Upper bound for sum over $S(A_q,\mathcal{P},z)$:} Next we bound the sum over $S(A_q,\mathcal{P},z)$ appearing in \eqref{r4bound}. To do this, we apply Proposition \ref{upperprop} with $k_1=8$, $k_2=4$ and choose $\alpha=0.07$ (and thus $k_{\alpha}=1.44$). This yields
    \begin{equation}\label{SAQfirstbound}
        \sum_{z\leq q<y}S(A_q,\PP,z)\leq 8e^{-\gamma}\left(1+\frac{32}{(\log X)^2}\right)\left(M_1(X)+M_2(X)\right)+\mathcal{E}(X),
    \end{equation}
    with $M_1(X)$, $M_2(X)$ and $\mathcal{E}(X)$ defined in \eqref{m1def}, \eqref{m2def} and \eqref{Edef} respectively. As in the derivation for our bound \eqref{finalSApz} for $S(A,\mathcal{P},z)$, we let $Q=2$, $\varepsilon=1.97\cdot 10^{-3}$ and $C_1(\varepsilon)=121$, courtesy of Proposition \ref{PrimeProductBound} and \cite[Table 1]{BJV24}. So, for $N> 1.98\cdot 10^{28}$,
    \begin{equation}\label{leadingAqbound}
        8e^{-\gamma}\left(1+\frac{32}{(\log X)^2}\right)\leq 4.526
    \end{equation}
    and for our choice of parameters,
    \begin{align}
        M_1(X)&\leq \frac{2\sqrt{N}}{\log X}\left[\frac{e^{\gamma}}{4}\left(\frac{\log(3.4)}{0.43}+\frac{20480}{1.44(\log X)^2}\right)+0.24\left(\log\left(2\right)+\frac{2560}{(\log X)^3}\right)\right]\notag\\
        &\leq \frac{2.909\sqrt{N}}{\log X},\label{M1bound}\\
        M_2(X)&\leq\frac{X^{1/4}}{\log X}\left(\frac{2e^{\gamma}}{1.44}+0.24\right)\leq\frac{2.713X^{1/4}}{\log X}\label{M2bound},\\
        \mathcal{E}(X)&= 2\cdot X^{0.43}\left(\log\left(2\right)+\frac{2560}{(\log X)^3}\right)\leq 1.405X^{0.43}.\label{ecalbound}
    \end{align}
    Substituting \eqref{leadingAqbound}, \eqref{M1bound}, \eqref{M2bound} and \eqref{ecalbound} into \eqref{SAQfirstbound} gives
    \begin{equation}\label{finalSAq}
        \sum_{z\leq q<y}S(A_q,\mathcal{P},z)\leq\frac{13.167\sqrt{N}}{\log X}+\frac{12.28 X^{1/4}}{\log X}+1.405X^{0.43}\leq\frac{14.124\sqrt{N}}{\log X}.
    \end{equation}
    \ \\ \\
    \textbf{Combining bounds:}
    Substituting \eqref{finalSApz} and \eqref{finalSAq} into \eqref{r4bound}, we have
    \begin{equation}\label{finalr4eq}
        r_4(A)\geq\frac{(8.8C(s)-7.113)\sqrt{N}}{\log X}-1.216X^{s/8},
    \end{equation}
    with $C(s)$ as in \eqref{cseq}. Setting $s=3.3$ gives $C(s)>0.839$ so that \eqref{finalr4eq} yields $r_4(A)>0$ for all $N>1.98\cdot 10^{28}$ as required. 
\end{proof}

\section*{Acknowledgements}
We thank Timothy Trudgian for some helpful suggestions as well as Lachlan Dunn and Simon Thomas for identifying a couple of small errors and typos in an earlier version of this work. The second author's research was supported by an Australian Government Research Training Program (RTP) Scholarship.
\newpage 

\begin{section*}[A]{Appendix: Explicit values for $Q$ and $\varepsilon$ in the linear sieve}
\setcounter{equation}{0}
In this appendix we prove an explicit version of the bound \eqref{epseq} appearing in Lemma \ref{linsievelem}. This amounts to providing a value of $Q$ and $\varepsilon$ for use in Lemma \ref{lowerlem} and Proposition \ref{upperprop}. 

First we give some explicit bounds on the error in Mertens' third theorem.
\begin{lemma}\label{mertenlem}
    For $2\leq x\leq 4\cdot 10^9$,
    \begin{align}\label{mert1}
        e^\gamma\log x<\prod_{p\leq x}\left(1-\frac{1}{p}\right)^{-1}<e^\gamma\log x+\frac{2e^{\gamma}}{\sqrt{x}},
    \end{align}
    and for $x\geq\exp(22)$,
    \begin{equation}\label{mert2}
        e^\gamma\log x\left(\frac{1}{1+\frac{0.841}{(\log x)^3}}\right)\leq\prod_{p\leq x}\left(1-\frac{1}{p}\right)^{-1}\leq e^\gamma\log x\left(\frac{1}{1-\frac{0.841}{(\log x)^3}}\right),
    \end{equation}
    where $\gamma$ is the Euler-Mascheroni constant.
\end{lemma}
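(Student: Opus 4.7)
The plan is to split Lemma \ref{mertenlem} into its two ranges of $x$ and handle each separately. For the small range $2\leq x\leq 4\cdot 10^9$ the proof is essentially computational, while for the large range $x\geq\exp(22)$ it rests on known explicit estimates in the style of Dusart. Note that the two ranges overlap slightly (since $\exp(22)\approx 3.58\cdot 10^9 < 4\cdot 10^9$), so together they cover all $x\geq 2$.

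For $2\leq x\leq 4\cdot 10^9$, the strict lower bound $e^\gamma\log x<\prod_{p\leq x}(1-1/p)^{-1}$ is a consequence of the Rosser--Schoenfeld estimates \cite{rosser1962approximate} (and in fact is known to hold for all $x\geq 2$). To verify the upper bound, I would observe that the product $\prod_{p\leq x}(1-1/p)^{-1}$ is a step function in $x$, jumping only at primes, whereas $e^\gamma\log x+2e^\gamma/\sqrt{x}$ is a smooth function whose behaviour between consecutive primes is easy to monitor. It therefore suffices to verify the inequality at each prime $p\leq 4\cdot 10^9$ and to check that the right-hand side remains above the left-hand side throughout the interval $[p,p')$ up to the next prime $p'$. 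This reduces the second inequality of \eqref{mert1} to a finite sieve computation over the primes up to $4\cdot 10^9$.

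For $x\geq\exp(22)$, the plan is to invoke an explicit logarithmic form of Mertens' third theorem, of the shape
\[
\bigl(1-c_{-}/(\log x)^3\bigr)\,e^\gamma\log x\;\leq\;\prod_{p\leq x}(1-1/p)^{-1}\;\leq\;\bigl(1+c_{+}/(\log x)^3\bigr)\,e^\gamma\log x,
\]
available from work of Dusart and its subsequent refinements. Using the elementary facts $1/(1+t)\leq 1-t+t^2$ and $1/(1-t)\geq 1+t$ for $0<t<1$, one checks that whenever $c_{\pm}$ are both strictly less than $0.841$ and $x\geq\exp(22)$, these one-sided estimates imply the asserted symmetric form \eqref{mert2}.

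The main obstacle is bookkeeping the constants: squeezing the final constant down to exactly $0.841$ requires tracking a chain of inequalities starting from an explicit estimate on $\theta(x)-x$ or $\psi(x)-x$, through partial summation to the Mertens sum $\sum_{p\leq x}1/p$, and finally to the Euler product via the identity $\log\prod_{p\leq x}(1-1/p)^{-1}=\sum_{p\leq x}1/p+\sum_{p\leq x}\sum_{k\geq 2}1/(kp^k)$. Each stage introduces a small loss, and the threshold $\exp(22)$ is calibrated precisely so that the accumulated losses leave a constant at most $0.841$. The small-range computation, while conceptually routine, also requires care to ensure the endpoint $4\cdot 10^9$ matches cleanly with the analytic threshold $\exp(22)$.
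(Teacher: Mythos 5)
Your treatment of the range $2\leq x\leq 4\cdot 10^9$ matches the paper's proof: the paper verifies \eqref{mert1} by computing $\prod_{p\leq p_n}(1-1/p)^{-1}$ at each prime $p_n\leq 4\cdot 10^9$ and checking
\[
e^\gamma\log p_{n+1}<\prod_{p\leq p_n}\left(1-\frac{1}{p}\right)^{-1}<e^\gamma\log p_n+\frac{2e^\gamma}{\sqrt{p_n}},
\]
which gives the inequality for all real $x\in[p_n,p_{n+1}]$ because $e^\gamma\log x+2e^\gamma/\sqrt{x}$ is increasing on $(1,\infty)$. Your remark that the Euler product is a step function jumping only at primes, while the right-hand side is a smooth function to be monitored between consecutive primes, is exactly this idea.

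For $x\geq\exp(22)$, however, you propose substantially more work than the paper actually carries out, and the extra work as described is not quite watertight. The paper simply cites \cite[Theorem 5]{vanlalngaia2017explicit} (with $\epsilon=-1$), which already states the bound in the reciprocal form $\bigl(1\pm 0.841/(\log x)^3\bigr)^{-1}e^\gamma\log x$; no conversion from a $1\pm c_\pm/(\log x)^3$ form is needed, and no re-derivation from $\theta(x)-x$ or $\psi(x)-x$ via partial summation is undertaken. Moreover, your proposed conversion has a small flaw: for the lower bound, $\bigl(1-c_-/(\log x)^3\bigr)\geq\bigl(1+0.841/(\log x)^3\bigr)^{-1}$ requires $c_-\leq 0.841/\bigl(1+0.841/(\log x)^3\bigr)$, which is strictly below $0.841$; the condition ``$c_-$ strictly less than $0.841$'' as you state it would fail if $c_-$ were, say, $0.84099$. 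This is fixable, but it illustrates that reproving the Dusart-style estimate from scratch and then converting it is unnecessary overhead when the cited reference already delivers \eqref{mert2} in the desired shape.
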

\begin{proof}
    To begin with, note that \eqref{mert2} is just \cite[Theorem 5]{vanlalngaia2017explicit} in the case $\epsilon=-1$. 
    
    Then, to prove \eqref{mert1} we used a direct computation. This computation took approximately 2.5 hours on a laptop with a 2.40 GHz processor. To begin with, we used the primesieve package in Python to compute and store all primes less than $10^8$. With $p_n$ denoting the $n$th prime number, we then verified that
    \begin{equation}\label{mertcompeq}
        e^\gamma\log p_{n+1}<\prod_{p\leq p_n}\left(1-\frac{1}{p}\right)^{-1}<e^\gamma\log p_n+\frac{2e^{\gamma}}{\sqrt{p_n}}
    \end{equation}
    for each\footnote{Some care needs to be taken at the end point here. We also had to compute the next prime after $10^8$ as to be able to substitute a value for $p_{n+1}$ into \eqref{mertcompeq}.} $p_n\leq 10^8$. Here, we note that if $x\in[p_n,p_{n+1}]$, then
    \begin{equation*}
        e^{\gamma}\log x\leq e^{\gamma}\log p_{n+1}\quad\text{and}\quad e^{\gamma}\log p_n+\frac{2e^{\gamma}}{\sqrt{p_n}}\leq e^\gamma\log x+\frac{2e^{\gamma}}{\sqrt{x}}. 
    \end{equation*}
    Hence, verifying \eqref{mertcompeq} for $p_n\leq 10^8$ also proves \eqref{mert1} for all $x\leq 10^8$. We repeated this process in intervals of length $10^8$ until \eqref{mert1} was proven for all $x\leq 4\cdot 10^9$.
\end{proof}
\begin{remark}
    The result \eqref{mert1} extends a classical computation by Rosser and Schoenfeld \cite[Theorem 23]{rosser1962approximate}. Note there is a minor error in \cite[Theorem 23]{rosser1962approximate} whereby it is claimed that \eqref{mert1} holds for all $0<x<2$, yet this is not true at e.g.\ $x=1.9$.
\end{remark}

We now prove the desired bound.
\begin{proposition} 
\label{PrimeProductBound}
    For all $u\geq 3$ and $z\geq 3024$, we have
    \begin{equation}\label{epsbound}
        \prod_{u\leq p<z}\left(1-\frac{1}{p}\right)^{-1}<(1+1.97\cdot 10^{-3})\frac{\log z}{\log u}.    
    \end{equation}
\end{proposition}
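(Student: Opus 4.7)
The plan is to rewrite
\[
\prod_{u \le p < z}\left(1 - \frac{1}{p}\right)^{-1} = \frac{P(Z)}{P(U)},
\]
where $P(x) := \prod_{p \le x}(1-1/p)^{-1}$ and $Z$, $U$ denote the largest primes strictly less than $z$ and $u$ respectively. Bounding this ratio reduces to applying the two explicit forms of Mertens' third theorem packaged in Lemma~\ref{mertenlem}, and I would split into cases based on the sizes of $z$ and $u$ relative to the transition point $e^{22}$ of \eqref{mert2}.

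For $u \ge e^{22}$ (hence also $z \ge e^{22}$), apply \eqref{mert2} to both numerator and denominator to get
\[
\frac{P(Z)}{P(U)} \le \frac{\log z}{\log u}\cdot\frac{1 + 0.841/(\log u)^3}{1 - 0.841/(\log z)^3}.
\]
Since $(\log u)^3 \ge 22^3 > 10^4$, the multiplicative correction is at most $(1+8\cdot 10^{-5})/(1-8\cdot 10^{-5}) < 1 + 1.7\cdot 10^{-4}$, comfortably below $1 + 1.97\cdot 10^{-3}$. The mixed range $u < e^{22} \le z$ is handled the same way, using the cruder lower bound $P(U) > e^{\gamma}\log u$ from \eqref{mert1} in the denominator; the correction becomes $1/(1-0.841/(\log z)^3)$, again negligible.

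For the remaining case $z < e^{22}$, both $u$ and $z$ lie in the range of \eqref{mert1}, which gives
\[
\frac{P(Z)}{P(U)} \le \frac{\log z}{\log u}\left(1 + \frac{2}{\sqrt{z}\,\log z}\right).
\]
The correction $2/(\sqrt{z}\log z)$ drops below $1.97\cdot 10^{-3}$ once $\sqrt{z}\log z \ge 2/(1.97\cdot 10^{-3}) \approx 1015$, which holds from some concrete threshold $z_1 \approx 1.2\cdot 10^4$ onwards. For the residual window $3024 \le z < z_1$ I would appeal to direct computation: the proof of Lemma~\ref{mertenlem} already produces the exact values $P(p_n)$ at every prime $p_n$ up to $4\cdot 10^9$, so the inequality $P(Z)/P(U) < (1 + 1.97\cdot 10^{-3})\log z/\log u$ can be checked for each prime pair $(U, Z)$ in the window. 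Since both sides are monotone between consecutive primes, only the discrete prime breakpoints need to be tested.

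The main obstacle is precisely this intermediate window $3024 \le z < z_1$: the additive $2e^{\gamma}/\sqrt{z}$ tail in \eqref{mert1} cannot be squeezed into a relative error as small as $2\cdot 10^{-3}$ when $z$ is only a few thousand, while the sharper \eqref{mert2} has not yet kicked in. The saving grace is that the gap is finite and lies well inside the range already tabulated in the proof of Lemma~\ref{mertenlem}, so the computation is cheap; the only care needed is identifying the worst-case $u$ for each $z$, which reduces to checking values of $P(p_n)/\log p_{n+1}$ at consecutive primes $p_n, p_{n+1}$ in the window.
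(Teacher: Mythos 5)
Your proposal follows essentially the same route as the paper: reduce to the two explicit Mertens-type bounds in Lemma~\ref{mertenlem}, split on the size of $z$ (the paper uses breakpoints $12000$ and $4\cdot 10^9$, you use roughly $1.2\cdot 10^4$ and $e^{22}$, which are effectively the same), apply \eqref{mert1} in the medium range, \eqref{mert2} (possibly mixed with \eqref{mert1}) in the large range, and finish the small window $3024 \le z \lesssim 12000$ by a direct finite computation at prime breakpoints. The only part glossed over — choosing the extremal non-prime $z$ and $u$ within each prime gap when doing the finite verification — is handled in the paper by iterating over $\lfloor z\rfloor$ and $\lceil u\rceil$ (or $u$ prime), but this is a routine detail that your ``check at prime pairs'' step would surface in implementation.
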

In the notation of Lemma \ref{linsievelem}, the bound \eqref{epsbound} allows us to take $\varepsilon=1.97\cdot 10^{-3}$ for $\mathcal{Q}=\{2\}$ and thus $Q=2$.
\begin{proof}[Proof of Proposition \ref{PrimeProductBound}]
    We split the proof into multiple cases for $z$.\\
    \\
    \textbf{Case 1:} $3024\leq z<12000$.\\
    For this range of $z$, we verify the lemma by a direct computation. To begin with, we compute the product
    \begin{equation*}
        I(x):=\prod_{p\leq x}\left(1-\frac{1}{p}\right)^{-1}
    \end{equation*}
    for all $0<x\leq 12000$. We now split into two further cases, depending on whether $u\geq 3$ is prime or not. Firstly, if $u\geq 3$ is not a prime number then
    \begin{equation}\label{ucase2}
        \prod_{u\leq p<z}\left(1-\frac{1}{p}\right)^{-1}\frac{\log u}{\log z}\leq\frac{I(\lfloor z\rfloor)}{I(\lceil u\rceil)}\frac{\log\lceil u\rceil}{\log(\lfloor z\rfloor)}.
    \end{equation}
    Computing the right-hand side of \eqref{ucase2} for all possible $3024\leq \lfloor z\rfloor<12000$ and $4\leq\lceil u\rceil\leq \lfloor z\rfloor$ gives a maximum value of $1+(1.904554\ldots)\cdot 10^{-3}$, occurring at $\lceil u\rceil=3298$ and $\lfloor z\rfloor=3947$.

    Now, suppose that $u\geq 3$ is a prime number. Then
    \begin{equation}\label{ucase3}
        \prod_{u\leq p<z}\left(1-\frac{1}{p}\right)^{-1}\frac{\log u}{\log z}\leq\frac{I(\lfloor z\rfloor)}{I(u-1)}\frac{\log u}{\log(\lfloor z\rfloor)},
    \end{equation}
    Computing the right-hand side of \eqref{ucase3} for all integers $\lfloor z\rfloor$ with $3024\leq\lfloor z\rfloor<12000$ and $u$ prime with $3\leq u<\lfloor z\rfloor$ gives a maximum value of $1+(1.967179\ldots)\cdot 10^{-3}$, occurring at $u=1423$ and $\lfloor z\rfloor=3947$. Each of the above computations took less than a minute on a laptop and verify that for all $3024\leq z<12000$, the inequality \eqref{epsbound} is satisfied.
    \\
    \\
    \textbf{Case 2:} $12000\leq z<4\cdot 10^9$.\\
    For this range of $z$, \eqref{mert1} gives
    \begin{equation*}
        \prod_{u\leq p<z}\left(1-\frac{1}{p}\right)^{-1}\leq\frac{\log z+\frac{2}{\sqrt{z}}}{\log u}=\frac{\log z}{\log u}\left(1+\frac{2}{\sqrt{z}\log z}\right)<\frac{\log z}{\log u}\left(1+1.95\cdot 10^{-3}\right),
    \end{equation*}
    which is slightly stronger than \eqref{epsbound}.\\
    \\
    \textbf{Case 3:} $z\geq 4\cdot 10^9$.\\
    For this range of $z$, if $u\leq 4\cdot 10^9$, then \eqref{mert1} and \eqref{mert2} yield
    \begin{align*}
        \prod_{u\leq p<z}\left(1-\frac{1}{p}\right)^{-1}\leq\frac{\log z}{\log u}\left(\frac{1}{1-\frac{0.841}{(\log z)^3}}\right)<\frac{\log z}{\log u}\left(\frac{1}{1-\frac{0.841}{(22)^3}}\right)<\frac{\log z}{\log u}\left(1+10^{-4}\right)
    \end{align*}
    since $4\cdot 10^9>\exp(22)$. Otherwise, if $u>4\cdot 10^9$, we apply \eqref{mert2} to obtain
    \begin{align*}
        \prod_{u\leq p<z}\left(1-\frac{1}{p}\right)^{-1}\leq\frac{\log z}{\log u}\left(\frac{1+\frac{0.841}{(\log u)^3}}{1-\frac{0.841}{(\log z)^3}}\right)<\frac{\log z}{\log u}\left(\frac{1+\frac{0.841}{(22)^3}}{1-\frac{0.841}{(22)^3}}\right)<\frac{\log z}{\log u}\left(1+1.6\cdot 10^{-4}\right).
    \end{align*}
    This completes the proof.
\end{proof}
\end{section*}

\newpage

\printbibliography

\end{document}